\definecolor{verydarkblue}{rgb}{0,0,0.5}
\theoremstyle{plain}
\crefname{introtheorem}{Theorem}{Theorems}
\newtheorem{theorem}{Theorem}
\newtheorem{proposition}[theorem]{Proposition}
\newtheorem{thm}[theorem]{Theorem}
\newtheorem{prop}[theorem]{Proposition}
\newtheorem{cor}[theorem]{Corollary}
\theoremstyle{definition}
\newtheorem{example}[theorem]{Example}
\newtheorem{defn}[theorem]{Definition}
\theoremstyle{remark}
\newtheorem{remark}[theorem]{Remark}
\numberwithin{theorem}{section}
\numberwithin{equation}{section}
\newcommand{\Q}{\mathbb{Q}}
\newcommand{\PP}{\mathbb{P}}
\newcommand{\Z}{\mathbb{Z}}
\newcommand{\G}{\mathbb{G}}
\newcommand{\Aff}{\mathbb{A}}
\newcommand{\aid}{\mathfrak{a}}
\newcommand{\OS}{\mathcal{O}}
\newcommand{\F}{\mathcal{F}}
\newcommand{\LL}{\mathcal{L}}
\newcommand{\M}{\mathcal{M}}
\newcommand{\B}{\mathcal{B}}
\newcommand{\X}{\mathcal{X}}
\newcommand{\Y}{\mathcal{Y}}
\newcommand{\spec}{\operatorname{Spec}}
\newcommand{\proj}{\operatorname{Proj}}
\newcommand{\Supp}{\operatorname{Supp}}
\newcommand{\ord}{\operatorname{ord}}
\newcommand{\vol}{\operatorname{vol}}
\newcommand{\lct}{\operatorname{lct}}
\newcommand{\exc}{\operatorname{Exc}}
\newcommand{\aut}{\operatorname{Aut}}
\newcommand{\cB}{\mathcal{B}}
\newcommand{\cD}{\mathcal{D}}
\newcommand{\cF}{\mathcal{F}}
\newcommand{\cL}{\mathcal{L}}
\newcommand{\cM}{\mathcal{M}}
\newcommand{\cO}{\mathcal{O}}
\newcommand{\cZ}{\mathcal{Z}}
\newcommand{\cX}{\mathcal{X}}
\newcommand{\cY}{\mathcal{Y}}
\newcommand{\cW}{\mathcal{W}}
\newcommand{\fM}{\mathfrak{M}}
\newcommand{\bP}{\mathbb{P}}
\newcommand{\bC}{\mathbb{C}}
\newcommand{\bR}{\mathbb{R}}
\newcommand{\bA}{\mathbb{A}}
\newcommand{\bQ}{\mathbb{Q}}
\newcommand{\bZ}{\mathbb{Z}}
\newcommand{\bG}{\mathbb{G}}
\newcommand{\pr}{\mathrm{pr}}
\newcommand{\ocX}{\overline{\cX}}
\newcommand{\ocL}{\overline{\cL}}
\newcommand{\ocD}{\overline{\cD}}
\DeclareMathOperator{\Aut}{Aut}
\DeclareMathOperator{\Fut}{Fut}
\DeclareMathOperator{\nvol}{\widehat{vol}}
\DeclareMathOperator{\Val}{Val}
\newcommand{\ocY}{\overline{\mathcal{Y}}}
\newcommand{\tD}{\widetilde{D}}
\newcommand{\tB}{\widetilde{B}}
\newcommand{\tcD}{\widetilde{\mathcal{D}}}
\title[Equivariant K-stability Under Finite Group Action]{Equivariant K-stability Under Finite Group Action}
\author{Yuchen Liu}
\address{Department of Mathematics, Yale University, New Haven, CT 06511, USA}
\email{yuchen.liu@yale.edu}
\author{Ziwen Zhu}
\address{Department of Mathematics, University of Utah, Salt Lake City, UT 84112, USA}
\email{{\tt zzhu@math.utah.edu}}
\thanks{Research of ZZ is partially supported by NSF Grant DMS-1265285.}
\date{\today}
\begin{document}
\begin{abstract}
We show that $G$-equivariant K-semistability (resp. $G$-equivariant K-polystability) implies K-semistability (resp. K-polystability) for log Fano pairs when $G$ is a finite group. 
\end{abstract}
\maketitle
\section{introduction}

K-stability is an algebro-geometric notion introduced by Tian \cite{Tia97} and reformulated algebraically by Donaldson \cite{Don02} to detect existence of (conical) K\"ahler-Einstein metrics on Fano varieties or more generally, on log Fano pairs. If a complex log Fano pair $(X,D)$ admits an action by a reductive group $G$, then it is conjectured that to test K-semistability and K-polystability it suffices to test on all $G$-equivariant test configurations. 
When $X$ is smooth and $D=0$, this conjecture is proven by Datar and Sz\'ekelyhidi \cite{DS16}. When $G$ is an algebraic torus, it is proven by Li and Xu \cite{LX16} for K-semistability and Li, Wang, and Xu \cite{LWX18} for K-polystability. 

In this paper, we prove the equivalence of K-semistability (resp. K-polystability) and $G$-equivariant K-semistability (resp. $G$-equivariant K-polystability) for log Fano pairs when $G$ is a finite group. Throughout the paper, we work over the field of complex numbers $\bC$.

\begin{theorem}\label{eq}
Let $(X,D)$ be a log Fano pair. Let $G< \aut(X,D)$ be a finite subgroup. If $(X,D)$ is $G$-equivariantly K-semistable (resp.  $G$-equivariantly K-polystable), then $(X,D)$ is K-semistable (resp. K-polystable).
\end{theorem}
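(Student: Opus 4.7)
My approach is valuative: translate $G$-equivariant K-(semi/poly)stability into the non-existence of $G$-invariant destabilizing valuations, and symmetrize any non-equivariant destabilizer using finiteness of $G$.

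For K-semistability, suppose $(X,D)$ is not K-semistable. By the Fujita--Li valuative criterion there is a divisorial valuation $v$ over $X$ with $\beta_{(X,D)}(v) := A_{(X,D)}(v) - S_{(X,D)}(v) < 0$. Since $G$ is finite, fix a $G$-equivariant log resolution $\pi \colon Y \to X$ and a $G$-stable simple normal crossing divisor $F = \sum_i F_i$ on $Y$ containing all components that see $v$ and its $G$-translates; after further equivariant blowups we may assume each $g_*v$ is quasi-monomial along a common stratum of $F$, with weight vectors $\alpha_g$ in a single simplicial cone $\sigma \cong \R^k_{\geq 0}$ of the quasi-monomial cone of $(Y,F)$. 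Define the $G$-invariant quasi-monomial valuation
\[
v^G := \frac{1}{|G|}\sum_{g\in G} g_* v,
\]
whose weight is $\frac{1}{|G|}\sum_g \alpha_g \in \sigma$. Both $A_{(X,D)}$ and $S_{(X,D)}$ are affine-linear and $G$-invariant on $\sigma$ --- linearity of $S$ comes from the Newton--Okounkov identification $S(v_\alpha) = \alpha\cdot\bar x$, with $\bar x$ the centroid of the relevant Okounkov body along the chosen flag --- so $\beta(v^G) = \beta(v) < 0$. Then $v^G$ induces a $G$-equivariant test configuration of $(X,D)$ with negative Donaldson--Futaki invariant, contradicting $G$-equivariant K-semistability.

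For K-polystability I would bootstrap from the semistability case. If $(X,D)$ is $G$-equivariantly K-polystable but not K-polystable, then by the previous step it is K-semistable, hence K-semistable but not K-polystable. The Li--Wang--Xu theory then supplies a nontrivial special test configuration of $(X,D)$ whose central fiber $(X_0,D_0)$ is its unique K-polystable degeneration (up to $\aut(X_0,D_0)$), with $(X_0,D_0) \not\cong (X,D)$. Uniqueness together with the $G$-action on $(X,D)$ induces a compatible $G$-action on $(X_0,D_0)$, and the realizing one-parameter subgroup $\lambda \subset \aut(X_0,D_0)$ can be pushed into the centralizer $\aut(X_0,D_0)^G$ by averaging its infinitesimal generator over $G$ in the Lie algebra, using that $\aut(X_0,D_0)^G$ is reductive in the spirit of Matsushima. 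The resulting nontrivial $G$-equivariant special test configuration of $(X,D)$ has K-polystable central fiber not isomorphic to $(X,D)$, contradicting $G$-equivariant K-polystability. The semistability step is essentially formal once the valuative criterion is invoked; the real obstacle is this polystability step, which demands a $G$-equivariant upgrade of Li--Wang--Xu uniqueness for the K-polystable degeneration and the reductivity / averaging argument producing the equivariant one-parameter subgroup.
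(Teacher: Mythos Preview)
Your K-semistability argument has a genuine gap at the averaging step. The translates $g_*v$ are in general centered at distinct points of $X$: take $G=\bZ/2$ acting on $\bP^1$ by $t\mapsto -t^{-1}$ and $v=\ord_0$, so $g_*v=\ord_\infty$. No sequence of equivariant blowups will place $\ord_0$ and $\ord_\infty$ as quasi-monomial valuations along a \emph{common} stratum, because the center of a valuation on any model lies over its center on $X$, and those are disjoint. Your linearity claim for $S$ on a quasi-monomial face is correct when all the valuations live on one face, but that hypothesis simply fails here, so the formula $\beta(v^G)=\beta(v)$ is not available. The paper takes a completely different route: it passes to the quotient log Fano pair $(Y,B)=(X,D)/G$, shows $(Y,B)$ is K-semistable (Proposition~\ref{quotient}), perturbs to a uniformly K-stable $(Y,B+\epsilon\Delta')$ via \cite{BL18b}, invokes the uniform Yau--Tian--Donaldson theorem of Li--Tian--Wang to obtain a K\"ahler--Einstein metric, pulls it back to $(X,D+\epsilon\Delta)$, and concludes by Berman's theorem. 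The authors explicitly remark that a purely algebraic proof of this direction is not obvious.

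Your K-polystability sketch is closer in spirit to the paper but the key step---producing a $G$-equivariant one-parameter subgroup with the same limit---is not achieved by averaging the infinitesimal generator. If $\xi$ generates $\lambda\subset\Aut(X_0,D_0)$, the element $\tfrac{1}{|G|}\sum g\xi g^{-1}$ is $G$-invariant but need not be semisimple, need not be nonzero, and even if it exponentiates to a $1$-PS there is no reason its limit on the relevant point is still the closed point corresponding to $(X_0,D_0)$. The paper instead works inside the \'etale-local quotient presentation $[\spec(A)/H]$ of the K-moduli stack at $(X_0,D_0)$ with $H=\Aut(X_0,D_0)$, embeds into a linear $H$-representation, and applies Luna's theorem that if $0\in\overline{H\cdot\tilde w}$ and $G$ fixes $\tilde w$, then already $0\in\overline{N_H(G)\cdot\tilde w}$; reductivity of $Z_H(G)$ then yields the desired $G$-commuting $1$-PS with the correct limit. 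That orbit-closure statement is exactly what your Lie-algebra averaging is trying to substitute for, and it genuinely requires the Luna/Kempf--Ness circle of ideas rather than a naive average.
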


We also show that K-semistability (resp. K-polystability) are preserved under crepant finite surjective morphisms (resp. Galois morphisms) between log Fano pairs.

\begin{thm}\label{thm:finitecover}
Let $\pi:(X,D)\to (Y,B)$ be a finite surjective morphism between log Fano pairs such that $K_X+D=\pi^*(K_Y+B)$. Then
\begin{enumerate}
    \item $(X,D)$ is K-semistable if and only if $(Y,B)$ is K-semistable.
    \item If $\pi$ is a Galois morphism between log Fano pairs (see Definition \ref{defn:galois}), then $(X,D)$ is K-polystable if and only if $(Y,B)$ is K-polystable.
    \item If either $(X,D)$ or $(Y,B)$ is K-unstable, then 
    $\delta(X,D)=\delta(Y,B)$. 
\end{enumerate}
\end{thm}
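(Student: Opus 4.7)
The strategy is to deduce (1) from (3) via the Fujita--Li $\delta$-criterion for K-semistability, and to prove (2) by reducing to the main \cref{eq} through a correspondence between equivariant test configurations of $(X,D)$ and test configurations of $(Y,B)$.

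For (3), the plan is to use the valuative formula $\delta = \inf_v A(v)/S(v)$ of Fujita--Odaka (extended to quasi-monomial valuations by Blum--Jonsson), and show that the ratio $A/S$ is invariant under restricting a divisorial valuation $v$ on $K(X)$ to $v|_{K(Y)}$. Explicitly, if $v|_{K(Y)} = e \cdot \ord_F$ where $e$ is the ramification index, the crepant log ramification formula gives $A_{X,D}(v) = e \cdot A_{Y,B}(\ord_F)$; a direct volume comparison using $-(K_X+D) = \pi^*(-(K_Y+B))$ and $\deg \pi = d$ yields
\[
\vol_X\bigl(-(K_X+D) - tv\bigr) = d \cdot \vol_Y\bigl(-(K_Y+B) - (t/e)\ord_F\bigr),
\]
and hence $S_{X,D}(v) = e \cdot S_{Y,B}(\ord_F)$ after the change of variables $s = t/e$ combined with $(-(K_X+D))^n = d \cdot (-(K_Y+B))^n$. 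Since every divisorial valuation over $Y$ is the restriction of a divisorial valuation over $X$, the two infima agree, giving $\delta(X,D) = \delta(Y,B)$ and thus (3). Part (1) then follows immediately: if $(Y,B)$ is K-unstable then $\delta(Y,B)<1$, so $\delta(X,D)<1$ and $(X,D)$ is K-unstable; the reverse implication is symmetric.

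For (2), assume $\pi:(X,D)\to(Y,B)$ is Galois with group $G$. The plan is to establish a bijection between normal test configurations $(\cY,\cL_Y)$ of $(Y,B)$ and $G$-equivariant normal test configurations $(\cX,\cL_X)$ of $(X,D)$: in one direction, take $\cX$ to be the normalization of $\cY \times_Y X$ (viewed as a family over $\bA^1$) with $\cL_X$ the pullback of $\cL_Y$, carrying a canonical $G$-action commuting with the $\bG_m$-action; in the other, take $\cY = \cX/G$ with $\cL_Y$ the descended $\bQ$-polarization. Intersection-number calculations on the finite map $\cX\to\cY$ then yield $\DF(\cX,\cL_X) = |G| \cdot \DF(\cY,\cL_Y)$, so $(Y,B)$ is K-polystable if and only if $(X,D)$ is $G$-equivariantly K-polystable. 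The ``if'' direction of (2) now follows from \cref{eq}, while the ``only if'' direction is automatic since every $G$-equivariant test configuration is a test configuration.

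The main obstacle lies in the test configuration correspondence: one must verify that the normalized base change and the GIT quotient produce honest normal test configurations (flatness over $\bA^1$, integrality and klt-ness of the central fibers, ampleness of the descended $\bQ$-line bundle after a suitable $G$-linearization), and that the Donaldson--Futaki invariants scale exactly by $|G|$. A related subtlety is matching up product test configurations on the two sides, in order to deduce K-polystability rather than merely K-semistability from \cref{eq}.
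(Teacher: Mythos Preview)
Your approach to (3) contains a genuine error: the claimed volume identity
\[
\vol_X\bigl(-(K_X+D) - tv\bigr) \;=\; d \cdot \vol_Y\bigl(-(K_Y+B) - (t/e)\ord_F\bigr)
\]
is false. The filtration on $H^0(X,m\pi^*L)$ by a \emph{single} preimage $F_1'$ of $F$ does not push down to the filtration by $F$; only the filtration by the full preimage $\sum_i e_i F_i'$ does, and even then one obtains merely an inequality (Fujita's lemma, as used in the proof of \cref{quotient}). For a concrete counterexample take the double cover $\pi:(\bP^1,0)\to(\bP^1,\tfrac12[0]+\tfrac12[\infty])$, $z\mapsto z^2$, and $v=\ord_{[1]}$ (unramified, so $e=1$): then $\vol_X(\cO(2)-t[1])=\max(2-t,0)$ while $2\vol_Y(\cO(1)-t[1])=2\max(1-t,0)$, and these disagree on $(1,2)$. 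Your unconditional conclusion $\delta(X,D)=\delta(Y,B)$ is therefore wrong---indeed \cref{ex_fujita} exhibits a Galois cover with $\delta(X,D)=1<\delta(Y,B)$---so the K-unstability hypothesis in (3) cannot be dropped. Since your (1) is deduced from (3), it inherits the gap. The paper argues in the opposite order: it proves (1) first by an analytic perturbation argument (via \cref{perturb}, find a uniformly K-stable perturbation $(Y,B+\epsilon\Delta')$, obtain a K\"ahler--Einstein metric from \cite{li2019uniform}, pull it back to $(X,D+\epsilon\pi^*\Delta')$, and let $\epsilon\to 0$), and then deduces (3) from (1) by the same perturbation trick.

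Your outline for (2) is close in spirit to the paper's, but the ``related subtlety'' about product test configurations that you flag at the end is the heart of the matter, and the direction $(X,D)$ K-polystable $\Rightarrow(Y,B)$ K-polystable is not automatic. If a $G$-equivariant test configuration of $(X,D)$ happens to be a product induced by a $1$-PS $\lambda$, the $G$-action on the central fibre $\cX_0\cong X$ is $g\mapsto\lim_{t\to 0}\lambda(t)\, g\,\lambda(t)^{-1}$, which is not a priori the original $G$-action on $X$, so the quotient central fibre is not obviously isomorphic to $(Y,B)$. The paper resolves this using reductivity of $\Aut(X,D)$ from \cite{ABHLX19} together with Richardson's theorem \cite{Ric88} that representations of a finite group into a reductive group have closed conjugacy classes; this is a substantive ingredient your plan does not supply.
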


Note that the ``only if'' direction in Theorem \ref{thm:finitecover}(1) is proven by Fujita \cite[Corollary 1.7]{Fuj19}. We remark here that similar statements to Theorems \ref{eq} and \ref{thm:finitecover} in the K-stable or uniformly K-stable case can fail (see e.g. \cite[Example 4.2]{Fuj19}).

We also generalize Tian's criterion for a finite group $G$ \cite{tian_alpha, Al} to the case of log Fano pairs.

\begin{cor}\label{cor:tian}
Let $(X,D)$ be a complex log Fano pair. Let $G< \aut(X,D)$ be a finite group action on $X$. If $\alpha_G(X,D)> \frac{n}{n+1}$ (resp. $\geq \frac{n}{n+1}$), then $(X,D)$ admits weak conical K\"ahler-Einstein metrics hence is K-polystable (resp. is K-semistable).
\end{cor}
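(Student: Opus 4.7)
The plan is to reduce Corollary \ref{cor:tian} to Theorem \ref{eq} via the $G$-equivariant version of Fujita--Odaka's comparison between the alpha invariant and the stability threshold $\delta$. Let $n = \dim X$. I would work with the $G$-equivariant stability threshold
\[
\delta_G(X,D) \;=\; \inf_{v} \frac{A_{X,D}(v)}{S(v)},
\]
where the infimum runs over $G$-invariant divisorial valuations $v$ of $\bC(X)$, together with the valuative description $\alpha_G(X,D) = \inf_v A_{X,D}(v)/T(v)$ of the $G$-equivariant alpha invariant, with $T(v)$ the pseudo-effective threshold of $v$ relative to $-(K_X+D)$.

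The first step is to establish the valuative inequality
\[
\delta_G(X,D) \;\geq\; \frac{n+1}{n}\,\alpha_G(X,D).
\]
This is an immediate consequence of Fujita's pointwise estimate $S(v) \leq \tfrac{n}{n+1} T(v)$, which holds for every divisorial valuation and so persists after taking infima over $G$-invariant $v$. Under the hypothesis $\alpha_G > n/(n+1)$ (resp.\ $\geq$) this forces $\delta_G > 1$ (resp.\ $\geq 1$).

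Second, I would invoke the $G$-equivariant valuative criterion of Fujita--Li--Blum--Jonsson: $\delta_G(X,D) \geq 1$ is equivalent to $G$-equivariant K-semistability, while $\delta_G(X,D) > 1$ implies $G$-equivariant uniform K-stability, and in particular $G$-equivariant K-polystability. Hence $(X,D)$ is $G$-equivariantly K-polystable (resp.\ K-semistable), and Theorem \ref{eq} descends each property to K-polystability (resp.\ K-semistability) of $(X,D)$ itself.

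For the existence of a weak conical K\"ahler--Einstein metric under the strict inequality, I would invoke the weak Yau--Tian--Donaldson correspondence for log Fano pairs (Berman, Berman--Boucksom--Jonsson, Li--Tian--Wang), by which K-polystability implies existence of a weak conical KE metric. Alternatively, one can run Tian's original continuity method directly on $(X,D)$ using $G$-invariant reference data; the hypothesis $\alpha_G > n/(n+1)$ then supplies the uniform $C^0$-estimate in the log setting. The main obstacle lies not in the algebraic steps, which are essentially bookkeeping on top of Theorem \ref{eq}, but in pinning down the right citation for the analytic existence theorem in full log Fano generality.
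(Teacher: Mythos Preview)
Your overall strategy—bound $\delta_G$ below by $\tfrac{n+1}{n}\alpha_G$, invoke an equivariant valuative criterion, then apply Theorem~\ref{eq}—is a reasonable alternative to the paper's route, but as written there is a genuine gap in the definitions you use.

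You take $\delta_G$ and $\alpha_G$ as infima of $A(v)/S(v)$ and $A(v)/T(v)$ over \emph{$G$-invariant divisorial valuations} of $\bC(X)$. A $G$-invariant divisorial valuation is necessarily $c\cdot\ord_E$ for a single $G$-stable prime divisor $E$, and such $E$ correspond only to those prime divisors over the quotient $Y=X/G$ whose preimage in $X$ is irreducible. This misses every prime divisor over $Y$ whose extensions to $\bC(X)$ form a nontrivial $G$-orbit (e.g.\ any divisor centred where $G$ acts freely). Consequently your infima can be strictly larger than the quantities that actually control $G$-equivariant K-stability, and the implication ``$\delta_G>1\Rightarrow G$-equivariantly uniformly K-stable'' is not justified: the valuative criterion proved in \cite{Zhu19} (Theorem~\ref{val} here) runs over \emph{all} prime divisors $F$ and uses the orbit-based invariants $S_G(F),\tau^G(F)$, not $S,T$ restricted to $G$-invariant valuations. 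Indeed the paper explicitly flags as open whether the orbit-based $\delta_G$ agrees with the one defined through $G$-invariant data. Your argument is easily repaired by switching to the paper's $S_G,\tau^G$: then $S_G(F)\le\tfrac{n}{n+1}\tau^G(F)$ is recorded after Definition~2.4, one checks $\alpha_G(X,D)=\inf_F A_{(X,D)}(F)/\tau^G(F)$ (by averaging $\Delta$ over $G$ to see $\tau^G(F)=\sup_{\Delta\ G\textrm{-inv}}\ord_F(\Delta)$), and Theorem~\ref{val} plus Theorem~\ref{eq} finish.

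By contrast, the paper's proof bypasses equivariant $\delta$-invariants entirely: it observes $\alpha_G(X,D)=\alpha(Y,B)$ via the bijection between $G$-invariant $\Delta\in|-K_X-D|_\bQ$ and $\Delta'\in|-K_Y-B|_\bQ$ together with $\lct(X,D;\sigma^*\Delta')=\lct(Y,B;\Delta')$, applies the ordinary Tian/Fujita--Odaka criterion on $(Y,B)$, and then invokes Theorem~\ref{thm:finitecover}. This also handles the K\"ahler--Einstein assertion cleanly: the strict inequality gives $(Y,B)$ uniformly K-stable, hence weak conical KE by \cite{li2019uniform}, and the metric pulls back along $\sigma$. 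Your route instead produces only K-polystability of $(X,D)$ and then needs ``K-polystable $\Rightarrow$ KE'' in full log Fano generality, which—as you yourself note—was not available at the time; the paper's quotient argument sidesteps this entirely.
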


Along the proof, we show that $G$-equivariant K-stability can be tested by $G$-equivariant special test configurations when $G$ is a finite group, generalizing a result of Li and Xu \cite{LX14}.

\begin{thm}\label{Gspec}
Let $(X,D)$ be a log Fano pair. Let $G< \aut(X,D)$ be a finite group action on $X$. If $\Fut(\cX,\cD;\cL)\geq 0$ for all $G$-equivariant special test configurations $(\cX,\cD;\cL)$ of $(X,D)$, then $(X,D)$ is $G$-equivariantly K-semistable.
\end{thm}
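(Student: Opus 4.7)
The strategy is to adapt the proof of Li--Xu \cite{LX14}, which reduces ordinary K-semistability to testing on special test configurations, by carrying out each step of their construction $G$-equivariantly. Given any normal $G$-equivariant test configuration $(\cX,\cD;\cL)$ of $(X,D)$ (so that $G$ acts on $\cX$ commuting with the $\bG_m$-action over $\bA^1$), the goal is to produce a $G$-equivariant \emph{special} test configuration $(\cX^{\mathrm{s}},\cD^{\mathrm{s}};\cL^{\mathrm{s}})$ satisfying
\[
\Fut(\cX^{\mathrm{s}},\cD^{\mathrm{s}};\cL^{\mathrm{s}}) \;\leq\; \Fut(\cX,\cD;\cL).
\]
Once this is achieved, combining with the hypothesis that $\Fut \geq 0$ on all $G$-equivariant special test configurations forces $\Fut(\cX,\cD;\cL)\geq 0$ for every $G$-equivariant test configuration, which is precisely $G$-equivariant K-semistability.

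Following \cite{LX14}, the reduction proceeds in three stages: (1) replace $(\cX,\cD)$ by the relative log canonical modification $(\cX^{\mathrm{lc}},\cD^{\mathrm{lc}}+\cX^{\mathrm{lc}}_{0,\mathrm{red}})$ over $\bA^1$; (2) pass to the relative ample model to obtain a polarized lc test configuration; (3) extract a Koll\'ar component of a suitable lc place on the central fiber and further $\bG_m$-degenerate to obtain a klt central fiber. At each stage the Donaldson-Futaki invariant is non-increasing, by the computations of Odaka and of Li--Xu. To initiate the MMP one uses a $(G\times\bG_m)$-equivariant log resolution of $(\cX,\cX_0+\cD)$, which exists by functorial resolution of singularities since $G$ is finite. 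Stages (1) and (2) are canonical---log canonical modifications and ample models are unique up to isomorphism whenever they exist---so the $G$-action on the input automatically lifts to the output, and the intermediate models are $G$-equivariant by uniqueness.

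The main obstacle is stage (3), where the choice of Koll\'ar component is not unique in general. Here the finiteness of $G$ is essential: the set of lc places realizing minimal log discrepancy on the central fiber is finite and $G$-stable, so one can either average the defining ideal sheaves over $G$ and apply $G$-equivariant Koll\'ar component extraction on a $G$-equivariant resolution, or directly run a $G$-equivariant relative MMP extracting such a place. The associated one-parameter degeneration commutes with $G$ (since $G$ preserves the divisorial valuation being used to twist), so the resulting special test configuration $(\cX^{\mathrm{s}},\cD^{\mathrm{s}};\cL^{\mathrm{s}})$ is $G$-equivariant. The Futaki inequalities from \cite{LX14} are numerical identities involving only intersection numbers on the central fiber and apply without change, yielding the desired comparison of Donaldson-Futaki invariants.
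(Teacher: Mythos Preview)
Your overall strategy---making every step of the Li--Xu reduction $G$-equivariant---is a legitimate route, but it is not the one the paper takes. The paper instead passes to the quotient: if $(X,D)$ were $G$-equivariantly K-unstable, then by Proposition~\ref{quotient} the quotient log Fano pair $(Y,B)=(X/G,B)$ is K-unstable, so \cite{LX14} (applied as a black box to $(Y,B)$) produces a special test configuration $(\cY,\cB)$ with $\Fut(\cY,\cB)<0$; Proposition~\ref{prop:specialquot} then lifts a base change $(\cY^{(d)},\cB^{(d)})$ to a $G$-equivariant special test configuration $(\cX,\cD)$ of $(X,D)$, and one checks $\Fut(\cX,\cD)=d\cdot\Fut(\cY,\cB)<0$, contradicting the hypothesis. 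The advantage of the paper's route is that it uses \cite{LX14} once on the quotient rather than reproving it equivariantly; the cost is the lifting construction in Proposition~\ref{prop:specialquot}, which is carried out via the cone construction and Koll\'ar components. Your approach, by contrast, avoids the quotient entirely and would give the stronger statement that any $G$-equivariant test configuration can be degenerated, with non-increasing Futaki invariant, to a $G$-equivariant special one.

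That said, your treatment of stage~(3) contains a genuine error. The claim that ``the set of lc places realizing minimal log discrepancy on the central fiber is finite'' is false in general: once $(\cX,\cX_0+\cD)$ is lc with $\cX_0$ reduced, there can be infinitely many divisorial lc places (think of toroidal valuations over a stratum of a snc pair). What Li--Xu actually do is perturb by a small auxiliary ample $\bQ$-divisor to single out a unique lc place, then run an MMP with scaling and take a finite base change $t\mapsto t^d$. To make this $G$-equivariant you must choose the perturbing divisor $G$-invariantly (possible by averaging a general member over $G$, since $G$ is finite), after which the MMP with scaling has unique, hence $G$-equivariant, steps. Your phrase ``directly run a $G$-equivariant relative MMP'' is the right idea, but the finiteness claim and the ``average the defining ideal sheaves'' suggestion preceding it do not make sense as written and should be replaced by the $G$-invariant tie-breaking argument just described.
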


As an application, we show that smooth cubic fourfolds of cyclic cover type are K-stable. Such cubic fourfolds were studied by Allcock, Carlson, and Toledo \cite{ACT11} where the authors relate their period map to moduli spaces of cubic threefolds.

\begin{thm}\label{thm:smcubic}
A smooth cubic fourfold $X\subset\bP^5$ with equation $x_5^3=f(x_0,\cdots, x_4)$ is K-stable hence admits K\"ahler-Einstein metrics.
\end{thm}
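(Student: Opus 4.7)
The cubic fourfold $X=\{x_5^3=f(x_0,\dots,x_4)\}$ admits the order-three automorphism $\sigma\colon x_5\mapsto\zeta_3 x_5$, and the linear projection $\pi\colon X\to\mathbb{P}^4$ dropping the last coordinate realizes the quotient by $\langle\sigma\rangle$. This $\pi$ is a Galois triple cyclic cover branched along the smooth cubic threefold $Y=\{f=0\}\subset\mathbb{P}^4$, and Riemann--Hurwitz gives $K_X=\pi^{*}(K_{\mathbb{P}^4}+\tfrac{2}{3}Y)$, so $\pi$ is a crepant finite Galois morphism between the log Fano pairs $(X,0)$ and $(\mathbb{P}^4,\tfrac{2}{3}Y)$.

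By \cref{thm:finitecover}(2), $(X,0)$ is K-polystable if and only if $(\mathbb{P}^4,\tfrac{2}{3}Y)$ is K-polystable. Since the automorphism group of a smooth cubic fourfold is finite, $\Aut(X)$ contains no algebraic torus and K-polystability coincides with K-stability for $X$; the K\"ahler--Einstein conclusion then follows from the Yau--Tian--Donaldson correspondence. The problem is thereby reduced to proving K-polystability of the log Fano pair $(\mathbb{P}^4,\tfrac{2}{3}Y)$ for every smooth cubic threefold $Y\subset\mathbb{P}^4$.

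For this pair the Tian-type criterion \cref{cor:tian} is not strong enough: taking $\Gamma=3H$ for a hyperplane $H\not\subset Y$ already shows $\alpha(\mathbb{P}^4,\tfrac{2}{3}Y)\le\tfrac{1}{3}<\tfrac{4}{5}$. Instead I would estimate the stability threshold $\delta(\mathbb{P}^4,\tfrac{2}{3}Y)$. Evaluating $A/S$ on the two natural divisors gives $A(H)/S(H)=A(Y)/S(Y)=5/3$ (using $\vol(3H)=81$ together with $S(H)=3/5$, $S(Y)=1/5$, $A(H)=1$, $A(Y)=1/3$), so one expects $\delta=5/3>1$ and hence uniform K-stability. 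My plan for the lower bound is an Abban--Zhuang style adjunction: restriction to a general hyperplane yields the log Fano pair $(\mathbb{P}^3,\tfrac{2}{3}(Y\cap\mathbb{P}^3))$ with a smooth cubic surface as boundary, amenable by dimension induction; restriction to the branch divisor $Y$ yields $(Y,0)$, a smooth cubic threefold whose K-stability is already known.

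The principal obstacle is exactly this uniform lower bound $\delta\geq 1$ (or the sharper $5/3$) over \emph{all} divisorial valuations. While the divisors $H$ and $Y$ realize the expected infimum, one must control $A/S$ at centers contained in $Y$---in particular at exceptional divisors obtained by blowing up lines lying on the cubic threefold---and at higher-order infinitely near points; the adjunction bookkeeping across the two restriction directions must be carried out carefully. Once $\delta(\mathbb{P}^4,\tfrac{2}{3}Y)\geq 1$ is secured, \cref{thm:finitecover}(2) concludes K-polystability of $X$, and finiteness of $\Aut(X)$ upgrades this to K-stability and hence to the existence of K\"ahler--Einstein metrics.
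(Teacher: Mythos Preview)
Your reduction via the cyclic triple cover and \cref{thm:finitecover}(2) to the K-polystability of $(\bP^4,\tfrac{2}{3}Y)$ is correct and is exactly how the paper begins. The divergence, and the genuine gap, is in how you propose to handle $(\bP^4,\tfrac{2}{3}Y)$: you outline an Abban--Zhuang adjunction scheme to bound $\delta$ from below, but as you yourself note, the control of $A/S$ over all centers (especially those supported on lines in $Y$ and at infinitely near points) is not carried out, and this is not a routine bookkeeping exercise. Also note a small slip: securing only $\delta\geq 1$ gives K-semistability, not K-polystability; you would need $\delta>1$ (which you do expect) or a separate polystability argument.

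The paper avoids the $\delta$-computation entirely by an interpolation argument. Since $Y$ is smooth it is GIT stable, hence K-polystable by \cite{LX17b}. On one side, the cone construction shows that $(\bP^4,(1-\tfrac{r}{4})Y)$ with $r=\tfrac{5}{3}-1=\tfrac{2}{3}$ specially degenerates to the projective cone $(C_p(Y,\cO_Y(3)),(1-\tfrac{r}{4})Y_\infty)$, which is K-semistable because $Y$ is; hence $(\bP^4,(1-\tfrac{r}{4})Y)$ is K-semistable. On the other side, GIT polystability of $Y$ together with \cite[Theorem~1.4]{ADL19} gives that $(\bP^4,\epsilon Y)$ is K-polystable for small $\epsilon$. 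Interpolation of K-stability along the segment of coefficients (\cite[Lemma~2.6]{Der16}, \cite[Proposition~2.13]{ADL19}) then yields K-polystability at the intermediate value $\tfrac{2}{3}<1-\tfrac{r}{4}=\tfrac{5}{6}$, with no divisorial case analysis needed. This is the missing idea in your proposal: rather than estimating $\delta$ directly, one leverages the already-known K-stability of the branch divisor $Y$ and of $\bP^4$ at two different boundary coefficients and interpolates.
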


This paper is organized as follows. In Section \ref{sec:prelim}, we collect preliminaries for K-stability, valuative criterion, stability thresholds, normalized volumes, Koll\'ar components, and K-moduli spaces of log Fano pairs. In Section \ref{sec:special}, we prove Theorem \ref{Gspec} by establishing a correspondence between $G$-equivariant special test configurations of a log Fano pair $(X,D)$ and special test configurations of its $G$-quotient log Fano pair $(Y,B)$. In Section \ref{sec:proofs} we present proofs to Theorems \ref{eq}, \ref{thm:finitecover}, and Corollary \ref{cor:tian}. The K-semistable parts follow from combining the recent work of Li, Tian, and Wang \cite{li2019uniform}, where they confirm the uniform version of Yau-Tian-Donaldson conjecture for log Fano pairs, with a perturbation argument from \cite{BL18b}. Proofs of the K-polystable parts use the recent work of Alper, Blum, Halpern-Leistner, and Xu \cite{ABHLX19} where they show  reductivity of automorphism group of a K-polystable log Fano pair. Finally, we present applications in Section \ref{sec:appl} including the proof of Theorem \ref{thm:smcubic}.

\subsection*{Acknowledgements}
We would like to thank Harold Blum, Tommaso de Fernex, Chi Li, Dapeng Mu, and Chenyang Xu for helpful discussions. We are grateful to Ivan Losev for providing the references \cite{luna1975adherences, VP89} which are essential in the proof of Theorem \ref{eq}.

\section{Preliminaries}\label{sec:prelim}
In this section, we recall the definitions of K-stability and equivariant K-stability. We also include other related notions and results about K-stability that will be used in later sections.
\subsection{Test configurations and K-stability}
\begin{defn}
A log Fano pair $(X,D)$ is a normal projective variety $X$ together with an effective $\bQ$-divisor $D$ such that $-(K_X+D)$ is $\bQ$-Cartier ample, and $(X,D)$ has klt singularities. A group action $G$ on $(X,D)$ is an algebraic subgroup $G<\aut(X,D)$.
\end{defn}

In the following definition, we define $G$-equivariant K-(poly/semi)stability. Note that the usual definition of K-(poly/semi)stability can be recovered by taking $G$ to be the trivial group.

\begin{defn}[\cite{Tia97, Don02, Al, LX14, Li15, OS15}]
Let $(X,D)$ be an $n$-dimensional log Fano pair with an action of a group $G$. Let $L$ be an ample line bundle on $X$ such that $L\sim -l (K_X+D)$ for some $l\in\bZ_{>0}$.
\begin{enumerate}
\item A \emph{$G$-equivariant test configuration}\footnote{In this paper, we only consider test configurations with normal total space.} $(\cX,\cD;\cL)/\bA^1$ of $(X,D;L)$ consists of the following data:
\begin{itemize}
 \item a normal variety $\cX$, an effective $\bQ$-divisor $\cD$ on $\cX$, together with a flat projective morphism $\pi:(\cX,\Supp(\cD))\to \bA^1$;
 \item a $\pi$-ample line bundle $\cL$ on $\cX$;
 \item a $G\times \bG_m$-action on $(\cX,\cD;\cL)$ such that $\pi$ is $G\times\bG_m$-equivariant with respect to the trivial action of $G$ on $\bA^1$ and the standard action of $\bG_m$ on $\bA^1$ via multiplication;
 \item $(\cX\setminus\cX_0,\cD|_{\cX\setminus\cX_0};\cL|_{\cX\setminus\cX_0})$
 is $G\times \bG_m$-equivariantly isomorphic to $(X,D;L)\times(\bA^1\setminus\{0\})$.
\end{itemize}

\item
A $G$-equivariant test configuration is called a \emph{product} test configuration if
\[
(\cX,\cD;\cL)\cong(X\times\bA^1,D\times\bA^1;\pr_1^* L\otimes \pr_2^*\cO_{\bA^1}(k\cdot 0))
\] for some $k\in\bZ$. A product test configuration is called a \emph{trivial} test configuration if the above isomorphism is $G\times\bG_m$-equivariant with respect to the given $G$-action on $X$, trivial $\bG_m$-action on $X$, trivial $G$-action on $\bA^1$, and the standard $\bG_m$-action on $\bA^1$ via multiplication.

A $G$-equivariant test configuration $(\cX,\cD;\cL)$ is called a \emph{$G$-equivariant special test configuration} if $\cL\sim_{\bQ}-l (K_{\cX/\bA^1}+\cD)$ and $(\cX,\cX_0+\cD)$ is plt. In this case, we say that $(X,D)$ \emph{$G$-equivariantly specially degenerates to} $(\cX_0,\cD_0)$ which is necessarily a log Fano pair.\footnote{We sometimes omit the polarization in special test configurations.}

\item (cf. \cite{Wan12, Oda13a}) Assume $\pi:(\cX,\cD;\cL)\to \bA^1$ is a $G$-equivariant test configuration of 
$(X,D;L)$. Let $\bar{\pi}: (\ocX,\ocD;\ocL)\to\bP^1$ be the natural $G\times\bG_m$-equivariant compactification of $\pi$. The \emph{generalized Futaki invariant} of $(\cX,\cD;\cL)$ is defined by the intersection formula
\begin{equation}\label{eq:Futint}
\Fut(\cX,\cD;\cL):=\frac{1}{(-K_X-D)^n}\left(\frac{n}{n+1}\cdot\frac{(\ocL^{n+1})}{l^{n+1}}+\frac{(\ocL^n\cdot (K_{\ocX/\bP^1}+\ocD))}{l^n}\right).
\end{equation}
\item The log Fano pair $(X,D)$ is said to be
\begin{itemize}
  \item \emph{$G$-equivariantly K-semistable} if $\Fut(\cX,\cD;\cL)\geq 0$ for any $G$-equivariant  test configuration $(\cX,\cD;\cL)/\bA^1$ and any $l\in\bQ_{>0}$ such that $L$ is Cartier. 
 \item  \emph{$G$-equivariantly K-stable} if it is $G$-equivariantly K-semistable and $\Fut(\cX,\cD;\cL)=0$ for a $G$-equivariant test configuration $(\cX,\cD;\cL)/\bA^1$ if and only if it is a trivial test configuration.
 \item  \emph{$G$-equivariantly K-polystable} if it is $G$-equivariantly K-semistable and $\Fut(\cX,\cD;\cL)=0$ for a $G$-equivariant test configuration $(\cX,\cD;\cL)/\bA^1$ if and only if it is a product test configuration.
 \item  \emph{$G$-equivariantly K-unstable} if it is not $G$-equivariantly K-semistable.
\end{itemize}
\end{enumerate}
\end{defn}


In order to define uniform K-stability, we first recall the definition of $J^{\textrm{NA}}(\X,\cD;\LL)$ following \cite{Fuj16}. Given a test configuration $(\X,\cD;\LL)$ of $(X,D;L)$, where $L\sim -l(K_X+D)$. let
\begin{center}
\begin{tikzcd}
& \Y \arrow[dl,"p"']
	\arrow[dr,"q"] & \\
X\times \PP^1 \arrow [rr,dotted] & & \overline{\X}.
\end{tikzcd}
\end{center}
be a common normal birational model of $X\times \PP^1$ and $\overline{\X}$. We set
$$
\lambda_{\max}(\X,\cD;\LL):=\frac{p^\ast (\pr_1^* L)^n\cdot q^\ast \overline{\LL}}{L^n},
$$
and define
$$
J^{\textrm{NA}}(\X,\cD;\LL):=\lambda_{\max}(\X,\cD;\LL)-\frac{\overline{\LL}^{n+1}}{(n+1)L^n}
$$
Note that $J^{\textrm{NA}}(\X,\cD;\LL)$ can be viewed as the norm of $(\X,\LL)$, and $J^{\textrm{NA}}(\X,\cD;\LL)=0$ if and only if $(\X,\cD;\LL)$ is a trivial  test configuration.

\begin{defn}
A log Fano pair $(X,D)$ with a $G$-action is \emph{$G$-equivariantly uniformly  K-stable} if there exists $\epsilon\in (0,1)$, such that $\Fut(\cX,\cD;\cL)\geq \epsilon J^{\textrm{NA}}(\X,\cD;\LL)$ for any $G$-equivariant  test configuration $(\cX,\cD;\cL)/\bA^1$ and any $l\in\bQ_{>0}$ such that $L$ is Cartier. 
\end{defn}

\subsection{Valuative criterion and stability thresholds}
We recall the valuative criteria for K-stability developed by Fujita \cite{Fuj16} and Li \cite{Li17}. For simplicity, we only state results for equivariant K-stability under finite group action following \cite{Zhu19}. The original version of valuative criteria can be recovered by taking the group action $G$ to be the trivial one, and we will omit $G$ in the corresponding notations in this case. 

Let $G<\aut (X,D)$ be a finite group action on a log Fano pair $(X,D)$. For any prime divisor $F$ over $X$, let $\pi:~Y\to X$ be an $G$-equivariant proper birational morphism such that $F$ is a prime divisor on $Y$. Denote by $G\cdot F=\sum_i F_i$ the sum of all the prime divisors in the orbit of $F$ under $G$-action. We define
\begin{align*}
S_G(F) & :=\frac{1}{(-K_X-D)^n}\int_0^{+\infty} \vol_Y\left(\pi^\ast(-K_X-D)-xG\cdot F\right)\,dx,\\
\beta^G(F) & :=(-K_X-D)^n (A_{(X,D)}(F)- S_G(F)),
\end{align*}
where $A_{(X,D)}(F)$ is the log discrepancy of $F$ with respect to the pair $(X,D)$. A prime divisor $F$ over $X$ is called $G$-dreamy if the graded ring
$$
\bigoplus_{k,j\geq 0} H^0\left(Y,k\pi^\ast(-K_X-D)-jG\cdot F)\right)
$$
is finitely generated.

We also define 
$$
\tau^G(F):=\sup\{t>0|\vol_X\left(\pi^\ast(-K_X-D)-tG\cdot F\right)>0\}
$$
and
$$
j^G(F)=\int_0^{\tau^G(F)}\bigg(\vol_X(-K_X-D)- \vol_X\left(\pi^\ast(-K_X-D)-tG\cdot F\right)\bigg)\,dx.
$$

\begin{theorem}\cite[Theorem A]{Zhu19}\label{val}
Under the above notation, we have the following characterization of K-stability in terms of $\beta^G$:
\begin{enumerate}
\item The following are equivalent:
 	\begin{enumerate}
 		\item[(i)] $(X,D)$ is  $G$-equivariantly uniformly K-stable;
 		\item[(ii)] there exists $0<\epsilon<1$, such that $\beta^G(F)\geq \epsilon j^G(F)$ for any prime divisor $F$ over $X$;
 		\item [(iii)] there exists $0<\epsilon<1$, such that $\beta^G(F)\geq \epsilon j^G(F)$ for any $G$-dreamy prime divisor $F$ over $X$.
 	\end{enumerate}
 \item The following are equivalent:
 	\begin{enumerate}
 		\item[(i)] $(X,D)$ is $G$-equivariantly K-semistable;
 		\item[(ii)] $\beta^G(F)\geq 0$ for any prime divisor $F$ over $X$;
 		\item [(iii)] $\beta^G(F)\geq 0$ for any $G$-dreamy prime divisor $F$ over $X$.
 	\end{enumerate}
 \item The following are equivalent:
 	\begin{enumerate}
 		\item[(i)] $(X,D)$ is $G$-equivariantly K-stable;
 		\item[(ii)] $\beta^G(F)> 0$ for any $G$-dreamy prime divisor $F$ 						over $X$.
 	\end{enumerate}
\end{enumerate}
\end{theorem}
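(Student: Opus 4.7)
The plan is to adapt the Fujita--Li valuative criterion to the $G$-equivariant setting under a finite group action. The implications (ii) $\Rightarrow$ (iii) in each part are immediate, since every $G$-dreamy divisor is in particular a divisor. For the substantive directions, by Theorem \ref{Gspec} it suffices (for the K-semistable case) to test on $G$-equivariant special test configurations, and an analogous reduction will suffice for the uniform and stable cases. The core task is therefore to set up a bijection between $G$-equivariant special test configurations and $G$-invariant divisorial valuations on $X$, and then to verify that the generalized Futaki invariant corresponds to $\beta^G(F)/(-K_X-D)^n$ while $J^{\mathrm{NA}}$ corresponds, up to a positive universal constant, to $j^G(F)/(-K_X-D)^n$.

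Starting from a prime divisor $F$ over $X$, I would form the orbit $G\cdot F=\sum_i F_i$ and consider the $G$-invariant filtration
\[
\cF^x H^0(X,-m(K_X+D)) = \{ s : \ord_{F_i}(s)\geq x \text{ for all } i\}.
\]
If $F$ is $G$-dreamy, the associated Rees algebra is finitely generated and produces a bona fide $G$-equivariant (special) test configuration with normal total space. In general, one approximates $F$ by $G$-dreamy divisors extracted via a $G$-equivariant version of the Koll\'ar-component construction; this equivariant extraction is available because $G$ is finite, so one may resolve and run the relevant MMP steps $G$-equivariantly. Conversely, any $G$-equivariant special test configuration $(\cX,\cD;\cL)$ has irreducible central fiber, and the induced divisorial valuation on $K(X)$ is $G$-invariant; by finiteness of $G$ this valuation is of the form $c\cdot\ord_F$ for some prime divisor $F$ over $X$, and $G$-equivariance of $(X,D)$ forces the same coefficient on each $F_i$ in the orbit, recovering the $G\cdot F$ structure used in the definition of $\beta^G$ and $j^G$.

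The computation of $\Fut$ via the intersection formula \eqref{eq:Futint} then proceeds as in the non-equivariant case, carried out on a $G$-equivariant common resolution $\cY$ dominating both $X\times\bP^1$ and $\ocX$. The subleading term unpacks to the log-discrepancy contribution $A_{(X,D)}(F)\cdot(-K_X-D)^n$, while the leading term unpacks to $-S_G(F)\cdot(-K_X-D)^n$ once the single divisor in the volume integral is replaced by the orbit $G\cdot F$ dictated by $G$-invariance, giving $\Fut=\beta^G(F)/(-K_X-D)^n$. A parallel expansion for the $J$-functional gives $J^{\mathrm{NA}}=j^G(F)/(-K_X-D)^n$ up to a harmless constant, yielding the uniform part (1). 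I expect the main obstacle to be the non-dreamy case: when the Rees algebra fails to be finitely generated one must pass from the filtration to a sequence of approximating $G$-equivariant test configurations, control the Futaki invariants and norms using continuity of $\vol$ along the Okounkov-body side (as in \cite{Fuj16,Li17}), and simultaneously preserve $G$-equivariance at every approximation step. The distinction between (1)(ii)/(iii), (2)(ii)/(iii), and the single condition in (3) then follows formally, since only $G$-dreamy divisors correspond to honest nontrivial $G$-equivariant special test configurations, so strict positivity of $\beta^G$ precisely on $G$-dreamy divisors is what rules out nontrivial Futaki-zero configurations.
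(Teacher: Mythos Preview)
This theorem is not proved in the present paper; it is quoted from \cite{Zhu19} as background input for Sections~3--4, and no proof appears here. There is therefore no in-paper argument to compare your proposal against.

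That said, your outline is the expected adaptation of the Fujita--Li strategy and is sound in its broad strokes, with two caveats. First, invoking Theorem~\ref{Gspec} risks circularity: in this paper Theorem~\ref{Gspec} is deduced from Proposition~\ref{quotient}, and the ``only if'' half of Proposition~\ref{quotient}(1) explicitly uses Theorem~\ref{val}. It turns out that the proof of Theorem~\ref{Gspec} needs only the ``if'' half of Proposition~\ref{quotient}(1), which is established directly from the intersection formula~\eqref{eq:Futint} without appealing to Theorem~\ref{val}, so the logic can be untangled; but you should make this explicit, or, better, avoid the forward reference entirely by carrying out a $G$-equivariant version of the MMP reduction to special test configurations as in \cite{LX14, Fuj16}. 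Second, Theorem~\ref{Gspec} as stated handles only K-semistability. Your ``analogous reduction'' for the uniform and K-stable cases is not supplied anywhere in this paper and constitutes the real work: one must control $J^{\mathrm{NA}}$ through the MMP steps (as in \cite{Fuj16}) and do so $G$-equivariantly. Once those reductions are in place, the identification of $\Fut$ with $\beta^G(F)/(-K_X-D)^n$ and of $J^{\mathrm{NA}}$ with $j^G(F)/(-K_X-D)^n$ for the test configuration arising from a $G$-dreamy $F$ is indeed a direct transcription of the computations in \cite{Fuj16}, with $F$ replaced by the orbit $G\cdot F$ throughout.
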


\begin{defn}
Following \cite{BJ17}, we define the $G$-equivariant stability threshold of a log Fano pair $(X,D)$ as
\begin{equation}\label{deltadef}
\delta_G(X,D):=\inf_{F}\frac{A_{(X,D)}(F)}{S_G(F)},    
\end{equation}
where $F$ runs through all prime divisors over $X$.
When $G$ is the identity group, we simply denote $S_G(F)$ by $S(F)$ and $\delta_G(X,D)$ by $\delta(X,D)$. 
\end{defn}
Note that the original definition of $\delta(X,D)$ introduced by Fujita and Odaka in \cite{FO16} involves the log canonical thresholds of basis type divisors. By the work of Blum and Jonsson in \cite{BJ17}, the two definitions (when $G$ is trivial) are equivalent. By similar arguments in the proof of Proposition 3.11 in \cite{BJ17} when $G$ is trivial (See also \cite[Proof of Theorem 5.1]{Zhu19}), for any prime divisor $F$ over $X$, we also have 
$$
\frac{1}{n+1}\tau^G(F)\leq S_G(F)\leq \frac{n}{n+1}\tau^G(F),    
$$
and hence
$$
\frac{S_G(F)}{n}\leq \frac{j^G(F)}{\vol_X(-K_X-D)}\leq nS_G(F).
$$
Then using Blum-Jonsson's definition, we see that the name stability threshold comes from the following immediate consequence of Theorem \ref{val}:
\begin{cor}
Under the above notation, we have
\begin{enumerate}
    \item $(X,D)$ is $G$-equivariantly K-semistable if and only if $\delta_G(X,D)\geq 1$;
    \item $(X,D)$ is $G$-equivariantly uniformly K-stable if and only if $\delta_G(X,D)> 1$.
\end{enumerate}
\end{cor}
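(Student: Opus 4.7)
The plan is to deduce both equivalences directly from the valuative criterion (Theorem \ref{val}) combined with the sandwich inequality
\[
\frac{S_G(F)}{n}\leq \frac{j^G(F)}{\vol_X(-K_X-D)}\leq n S_G(F)
\]
stated just above the corollary, together with the observation that $\beta^G(F)$ and $A_{(X,D)}(F)-S_G(F)$ have the same sign since $(-K_X-D)^n>0$.

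For part (1), I would argue that by the definition \eqref{deltadef}, the condition $\delta_G(X,D)\geq 1$ is equivalent to $A_{(X,D)}(F)\geq S_G(F)$ for every prime divisor $F$ over $X$, which in turn is equivalent to $\beta^G(F)\geq 0$ for every such $F$. By Theorem \ref{val}(2), this is exactly $G$-equivariant K-semistability. This should be a one-line observation in the write-up.

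For part (2), I would prove the two directions separately. In the forward direction, if $\delta_G(X,D)>1$, pick $\epsilon'>0$ with $A_{(X,D)}(F)\geq (1+\epsilon')S_G(F)$ for all $F$; multiplying by $(-K_X-D)^n$ gives $\beta^G(F)\geq \epsilon'(-K_X-D)^n S_G(F)$, and the right-hand inequality of the sandwich yields $\beta^G(F)\geq (\epsilon'/n)\, j^G(F)$, so $G$-equivariant uniform K-stability follows from Theorem \ref{val}(1). Conversely, if uniform K-stability holds with constant $\epsilon$, then using the left-hand inequality $j^G(F)\geq (-K_X-D)^n S_G(F)/n$ gives $(-K_X-D)^n(A_{(X,D)}(F)-S_G(F))\geq (\epsilon/n)(-K_X-D)^n S_G(F)$, hence $A_{(X,D)}(F)/S_G(F)\geq 1+\epsilon/n$ uniformly in $F$, i.e.\ $\delta_G(X,D)\geq 1+\epsilon/n>1$.

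There is no real obstacle here; the corollary is essentially a bookkeeping exercise that translates the two valuative characterizations of semistability and uniform K-stability into the threshold language, and the only point requiring care is invoking the correct direction of the sandwich inequality in each implication of part (2). I would simply present the argument in a few lines and then move on.
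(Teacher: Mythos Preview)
Your proposal is correct and follows essentially the same approach as the paper, which states the corollary as an ``immediate consequence of Theorem~\ref{val}'' after recording the sandwich inequality $\frac{S_G(F)}{n}\leq \frac{j^G(F)}{\vol_X(-K_X-D)}\leq n S_G(F)$. Your write-up simply spells out the bookkeeping the paper leaves implicit, and each step is sound.
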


\begin{remark}
In \cite{golota2019delta}, Golota defines equivariant stability threshold for connected algebraic group action by taking the infimum in \eqref{deltadef} among invariant prime divisors over $X$, although for finite group action, they use invariant basis type divisors instead. 
It is an interesting question whether our $\delta_G$ coincides with Golota's definition when $G$ is a finite group. 
\end{remark}

\subsection{Normalized volumes and Koll\'ar components}
Let $x\in (X,D)$ be an $n$-dimensional klt singularity. Denote by $\Val_{X,x}$ the $\bR$-valued valuations on $\bC(X)$ centered at $x$. For any valuation $v\in \Val_{X,x}$, the volume of $v$ is defined as
$$
\vol(v)=\limsup_{m\to \infty}\frac{l\left(\OS_{X,x}/(\aid_m(v)\cdot\OS_{X,x})\right)}{m^n/n!},
$$
where $\aid_m(v)$ is the ideal sheaf of regular functions with vanishing order at least $m$ with respect to the valuation $v$, and $l$ is the length. The normalized volume function $\nvol$ introduced by Li in \cite{Li18} is defined as
$$
\nvol_{(X,D),x}(v):=\begin{cases}
A_{(X,D)}(v)^n\vol(v) & \textrm{if }A_{(X,D)}(v)<+\infty\\
+\infty & \textrm{otherwise}
\end{cases}
$$
where $A_{(X,D)}(v)$ is the log discrepancy of $v$. It is proven by Blum \cite{Blu18} that there always exists a valuation $v_{\min}$ which minimizes $\nvol$.
Note that for klt singularities, such minimizing normalized volume $\nvol_{(X,D),x}(v_{\min})$ (also called the local volume of $x\in (X,D)$) is always positive by \cite{Li18, Blu18}. 

Next we define Koll\'ar components over a klt singularity $x\in (X,D)$ following \cite{LX16}.
\begin{defn}
A prime divisor $S$ over a klt closed point $x\in (X,D)$ is called a \emph{Koll\'ar component} if there exists a proper birational morphism $\pi:~Y\to X$ such that $S=\exc(\pi)$ is centered at $x$, the pair $(Y,S+\pi^{-1}_\ast D)$ is plt and $-S$ is ample over $X$.
\end{defn}
A  Koll\'ar component $S$ over $x\in (X,D)$ defines a divisorial valuation $
\ord_S\in \Val_{X,x}$. Therefore we can consider the normalized volume $\nvol_{(X,D),x}(\ord_S)$. It is proven by Li and Xu \cite{LX16} that there exists a sequence $S_k$ of Koll\'ar components such that $\nvol_{(X,D),x}(\ord_{S_k})$ converges to  $\nvol_{(X,D),x}(v_{\min})$ as $k\to\infty$.
Please refer to \cite{LX16, LLX18} for more details about the relation between normalized volume, Koll\'ar components, and K-semistability.

\subsection{Log Fano K-moduli spaces}\label{sec:goodmoduli}

In this section, we collect useful results from recent progress on the algebro-geometric construction of  K-moduli spaces of log Fano pairs.

\begin{defn}
Let $n\in\bZ_{>0}$, $V\in\bQ_{>0}$, and $I\subset [0,1]\cap \bQ$ be a finite subset that is closed under addition\footnote{A subset of $[0,1]$ is closed under addition if any finite sum of elements either belongs to this set or is greater than $1$.}. 
We define the moduli pseudo-functor $\fM_{n,I,V}^{\rm Kss, sn}$ over seminormal schemes $S$ of finite type over $\bC$ as 
\[
\fM_{n,I,V}^{\rm Kss, sn}(S)=\left\{\begin{array}{l}\textrm{Proper locally stable families $(X,D)\to S$ whose geometric fibers}\\
\textrm{are K-semistable log Fano pairs of dimension $n$ and volume $V$}\\
\textrm{with coefficients in $I$.}
\end{array}
\right\}
\]
Here \emph{locally stable} is considered in the sense of \cite[Definition-Theorem 4.45]{Kol17}.
\end{defn}

The following theorem follows from recent developments in the algebro-geometric construction of log Fano K-moduli spaces. 

\begin{thm}[\cite{Jia17, BX18, ABHLX19, BLX19, Xu19}]\label{thm:goodkmoduli}
The moduli pseudo-functor $\fM_{n,I,V}^{\rm Kss, sn}$ is represented by a seminormal Artin stack $\cM_{n,I,V}^{\rm Kss, sn}$ of finite type over $\bC$. Moreover, $\cM_{n,I,V}^{\rm Kss, sn}$ admits a separated good moduli space $M_{n,I,V}^{\rm Kps, sn}$ whose closed points parametrize K-polystable log Fano pairs of dimension $n$ and volume $V$ with coefficients in $I$.
\end{thm}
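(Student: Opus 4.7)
The plan is to assemble this theorem from the cited sequence of results, each of which handles one piece of the moduli construction. The overall strategy follows the standard Artin stack $\Rightarrow$ good moduli space paradigm, where the main inputs are boundedness, local closedness of the K-semistability condition, and verification of the existence criteria of Alper--Halpern-Leistner--Heinloth for good moduli spaces.

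First I would establish \emph{boundedness}: the collection of K-semistable log Fano pairs of dimension $n$, volume $V$, and coefficients in the finite set $I$ forms a bounded family. This follows from Jiang's theorem \cite{Jia17} (generalized to the log setting), which uses effective birationality type arguments to produce a uniformly very ample multiple of $-m(K_X+D)$ with $m$ depending only on $n,I,V$. Boundedness allows me to embed all such pairs into a fixed projective space $\bP^N$, realizing them as points of a suitable relative Hilbert scheme $\mathrm{Hilb}$ parametrizing pairs $(X,D)\subset \bP^N$ with the right Hilbert polynomial. Restricting to the locally closed locus where the geometric fibers are klt log Fano pairs and taking the quotient stack $[Z/\mathrm{PGL}_{N+1}]$ produces an Artin stack $\cZ$ of finite type over $\bC$ parametrizing all such pairs.

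Next I would cut out the K-semistable locus. By the openness of K-semistability in families, proven in \cite{BLX19, Xu19} (building on the valuative criterion and the existence of minimizing divisorial valuations), there is an open substack $\cZ^{\rm Kss}\subset \cZ$ whose geometric points are exactly the K-semistable pairs. Taking the seminormalization of this stack (since our pseudo-functor is defined on seminormal test schemes) produces $\cM_{n,I,V}^{\rm Kss, sn}$, and the pseudo-functor representability is then a formal consequence of locally stable deformation theory \cite{Kol17}.

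For the good moduli space, I would apply the general existence criterion of Alper--Halpern-Leistner--Heinloth, which requires showing that $\cM_{n,I,V}^{\rm Kss, sn}$ is $\Theta$-reductive, S-complete, has reductive stabilizers at closed points, and that closed points admit unique K-polystable representatives in each S-equivalence class. The reductivity of $\Aut(X,D)$ for K-polystable $(X,D)$ is the main result of \cite{ABHLX19}. $\Theta$-reductivity and S-completeness translate into uniqueness statements about K-semistable degenerations and K-polystable specializations, proven via the minimization of $\beta$ over test configurations in \cite{BX18, ABHLX19}. Separatedness of the good moduli space $M_{n,I,V}^{\rm Kps, sn}$ follows from the uniqueness of K-polystable degenerations of a K-semistable family inside $\bA^1$, again a consequence of the results of \cite{BX18, LWX18}.

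The hardest part, if one were reproving this from scratch, would be the openness of K-semistability together with the verification of S-completeness: both require the deep valuative/normalized volume machinery and the construction of K-polystable degenerations via special test configurations. Since the paper only uses this theorem as a black box input to later arguments, a complete proof is not necessary here; I would simply cite \cite{Jia17, BX18, ABHLX19, BLX19, Xu19} and indicate which paper supplies each step above.
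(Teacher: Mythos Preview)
Your proposal is correct and follows essentially the same approach as the paper: both treat the theorem as an assembly of the cited results, attributing boundedness to \cite{Jia17} (extended to the log case), openness of K-semistability to \cite{BLX19, Xu19}, separatedness to \cite{BX18}, and existence of the good moduli space to \cite{ABHLX19}. The paper's proof is even terser than yours, simply listing which reference supplies each ingredient without spelling out the Hilbert scheme construction or the AHLH criterion.
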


\begin{proof}
Boundedness follows from \cite{Jia17} and its generalization to log Fano pairs (see \cite{Che18} and \cite[Corollary 6.14]{LLX18}). Openness follows from \cite{BLX19, Xu19}. Separatedness follows from \cite{BX18}. Existence of good moduli spaces follows from \cite{ABHLX19}. 
\end{proof}

The following corollary follows from Theorem \ref{thm:goodkmoduli}, \cite[Proposition 4.4]{AHLH18}, and \cite[Remark 2.11]{ABHLX19}. 

\begin{cor}[loc. cit.]\label{cor:localquot} With the above notation, let $(X,D)$ be a K-polystable pair representing a closed point $x:=[(X,D)]$ in $\cM:=\cM_{n,I,V}^{\rm Kss, sn}$. Denote by $G_x$ the stabilizer group of $x$ in $\cM$. Then there exists an \'etale morphism $f: (\cW,w)\to (\cM,x)$ where $\cW=[\spec(A)/G_x]$ such that 
\begin{enumerate}
    \item $f$ induces an isomorphism of stablizer groups at all points;
    \item $f$ sends closed points to closed points;
    \item $f$ is $\Theta$-surjective (see \cite[Section 3.4]{AHLH18} for a definition). 
\end{enumerate}
\end{cor}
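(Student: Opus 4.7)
The plan is to extract the result directly from the cited references rather than reprove it from scratch, since the corollary is essentially a repackaging of the local structure theory for Artin stacks with good moduli spaces specialized to the K-moduli stack. The approach is to verify that all hypotheses of the cited local quotient presentation theorem are met, and then read off (1)--(3).

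First, I would establish the inputs needed. By Theorem \ref{thm:goodkmoduli}, the stack $\cM=\cM_{n,I,V}^{\rm Kss, sn}$ is an Artin stack of finite type over $\bC$ (with affine diagonal, coming from the boundedness and properness of $\Hom$-schemes of the parametrized pairs) admitting a separated good moduli space $M_{n,I,V}^{\rm Kps, sn}$. The closed point $x=[(X,D)]$ corresponds to a K-polystable pair, and its stabilizer $G_x$ is naturally identified with $\aut(X,D)$. Crucially, this automorphism group is reductive by the reductivity theorem of \cite{ABHLX19}, so $G_x$ is linearly reductive in characteristic zero.

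Second, I would apply \cite[Proposition 4.4]{AHLH18}, the local quotient presentation theorem. At a closed point of an Artin stack whose good moduli space is separated and whose stabilizer is linearly reductive, this theorem produces an \'etale neighborhood of the form $f:\cW=[\spec(A)/G_x]\to\cM$ together with a point $w\in\cW$ with $f(w)=x$. The standard conclusions of this theorem include: $f$ induces an isomorphism of stabilizers at $w$ (and, by shrinking, at all points), and $f$ sends closed points to closed points. This gives conditions (1) and (2) directly.

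Third, for the $\Theta$-surjectivity condition (3), I would invoke \cite[Remark 2.11]{ABHLX19}, which explains how the \'etale neighborhood from \cite{AHLH18} can be arranged to be $\Theta$-surjective by restricting to a suitable $G_x$-invariant open subscheme of $\spec(A)$. The geometric content is that every morphism $\Theta=[\bA^1/\bG_m]\to\cM$ whose special point maps to $x$ corresponds to a test configuration degenerating to $(X,D)$, and such degenerations are captured by one-parameter subgroups of $G_x$ acting on the local slice; hence they lift to $\cW$. The main potential obstacle, if one were not invoking the cited remark, would be this $\Theta$-surjectivity step, as conditions (1) and (2) are automatic from the general local structure theorem while (3) encodes the specific compatibility between test configurations on $\cM$ and on the local quotient presentation.
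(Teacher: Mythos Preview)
Your proposal is correct and matches the paper's approach exactly: the paper itself offers no argument beyond stating that the corollary follows from Theorem~\ref{thm:goodkmoduli}, \cite[Proposition 4.4]{AHLH18}, and \cite[Remark 2.11]{ABHLX19}, and you have simply unpacked how those three inputs combine to yield (1)--(3). There is nothing to add.
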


\section{Special test configurations and equivariant K-stability}\label{sec:special}

In this section, we consider the relation between $G$-equivariant K-semistability and K-semistability of the quotient. As a consequence, we prove Theorem \ref{Gspec} which states that in order to check $G$-equivariant K-semistability, it is enough to examine only $G$-equivariant special test configurations.

We first fix some notation in this section. Let $G$ be a finite group action on a log Fano pair $(X,D)$, and $\sigma:~X\to Y:=X/G$ the quotient map. By the canonical bundle formula, we have 
$$
K_X+D=\sigma^\ast (K_Y+D')+\sum (e_R-1)R,
$$
where $D'$ is the $\bQ$-divisor on $Y$ such that $\sigma^\ast D'=D$, and $R$ runs through all ramification divisors and $e_R$ is the ramification index of $R$.

Now let $\sum B_i$ be the collection of irreducible branch divisors on $Y$. Then we have
$$
\sigma^\ast B_i=\sum_{\sigma(R)=B_i}e_R R.
$$
Note that for each $B_i$, the relevant ramification divisors with $\sigma(R)=B_i$ all have the same ramification index since they belong to the same $G$-orbit, and we denote the common $e_R$ by $e_i$. Let
$$
B:=D'+\sum_i \left(1-\frac{1}{e_i}\right)B_i.
$$
Then we have $K_X+D=\sigma^\ast (K_Y+B)$, and the pair $(Y,B)$ is log Fano by \cite[Proposition 5.20]{KM98}. 

\begin{defn}\label{defn:galois}
We say a finite surjective morphism $\sigma: (X,D)\to (Y,B)$ between log Fano pairs is a \emph{Galois morphism} if there exists a finite group $G$ acts on $(X,D)$ such that $Y\cong X/G$ and $B$ is the $\bQ$-divisor determined above such that $K_X+D=\sigma^*(K_Y+B)$.
\end{defn}

\begin{example}\label{ex_fujita}
Consider the Galois morphism $\pi:~\left(\PP^1,d[1]+d[-1]\right)\to \left(\PP^1,\frac{1}{2}[0]+\frac{1}{2}[\infty]+d[1]\right)$ in \cite[Example 4.2]{Fuj19},
where $\pi$ can be realized by quotient under the $G=\Z/2$-action on $\left(\PP^1,d[1]+d[-1]\right)$ with $[t]\mapsto [-t]$. For $d\in (0,1)$, we know that $\left(\PP^1,\frac{1}{2}[0]+\frac{1}{2}[\infty]+d[1]\right)$ is uniformly K-stable while $\left(\PP^1,d[1]+d[-1]\right)$ is not,  but $\left(\PP^1,d[1]+d[-1]\right)$ is in fact  $G$-equivariantly uniformly K-stable. Indeed, by direct computation of stabiliy thresholds using definition, we have
$$
\delta_G\left(\PP^1,d[1]+d[-1]\right)=\delta\left(\PP^1,\frac{1}{2}[0]+\frac{1}{2}[\infty],d[1]\right)=\min\left\{2,\frac{1}{1-d}\right\}>1.
$$
\end{example}

\begin{example}\label{ex_triangle}
Let $\Delta(l,m,n)$ be a triangle group and assume $l\leq m\leq n$. For suitable choice of $l,m,n$ with $1/l+1/m+1/n>1$, $\Delta(l,m,n)$ is a finite group generated by reflections across the sides of a triangle on a real 2-sphere with angles $\pi/l,\pi/m,\pi/n$. Denote by $a,b,c$ the three reflections generating $\Delta(l,m,n)$. Consider the subgroup $D(l,m,n)< \Delta(l,m,n)$ generated by $ab$, $bc$ and $ca$, which are  rotations centered at the vertices of the triangle by the angles of $2\pi/l$, $2\pi/m$ and $2\pi/n$ respectively. Note that $D(l,m,n)$ can be considered as an algebraic group action on $\PP^1$ without any fixed point. Denote by $G$ the representation of $D(l,m,n)$ inside $\aut(\PP^1)=\mathrm{PGL}(2,\bC)$. By taking the quotient under $G$-action, we get a log Fano pair 
$$
\left(\PP^1,\frac{l-1}{l}p_1,\frac{m-1}{m}p_2,\frac{n-1}{n}p_3\right),
$$ 
where $p_1$, $p_2$ and $p_3$ are the three fixed points of the three rotations respectively. By direct computation of stability thresholds using definitions, we have
$$
\delta_G(\PP^1)=\delta\left(\PP^1,\frac{l-1}{l}p_1,\frac{m-1}{m}p_2,\frac{n-1}{n}p_3\right)=\frac{2/n}{1/l+1/m+1/n-1}\geq 2.
$$
Therefore we know that the pair $\left(\PP^1,\frac{l-1}{l}p_1,\frac{m-1}{m}p_2,\frac{n-1}{n}p_3\right)$ is uniformly K-stable and $\PP^1$ is $G$-equivariantly uniformly K-stable. 
\end{example}

Next, under the above notation, we show the equivalence between various $G$-equivariant K-stability conditions of $(X,D)$ and corresponding K-stability conditions of its quotient $(Y,B)$.

\begin{proposition}\label{quotient}
Under the above notation, we have the following:
\begin{enumerate}
    \item $(X,D)$ is $G$-equivariantly K-semistable  if and only if $(Y,B)$ is K-semistable.
    \item $(X,D)$ is $G$-equivariantly (uniformly) K-stable  if and only if $(Y,B)$ is (uniformly) K-stable.
\end{enumerate}

\end{proposition}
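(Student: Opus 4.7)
The plan is to apply the valuative criterion (Theorem \ref{val}) after setting up a bijection between prime divisors over $Y$ and $G$-orbits of prime divisors over $X$ under which the relevant numerical invariants match. Given any prime divisor $F$ over $X$, I first pass to a $G$-equivariant birational model $\pi\colon X'\to X$ on which $F$ lives, set $Y':=X'/G$ with induced finite quotient $\sigma'\colon X'\to Y'$, and let $E:=\sigma'(F)$. Since the members of the $G$-orbit $G\cdot F=\sum F_i$ all lie over $E$ with common ramification index $e:=e(F/E)$, one has $(\sigma')^*E=e(G\cdot F)$. Conversely, every prime divisor $E$ over $Y$ arises this way by taking the normalization of $X\times_Y Y'$, so the assignment $G\cdot F\mapsto E$ is a bijection between $G$-orbits of prime divisors over $X$ and prime divisors over $Y$.

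The main computation is that under this correspondence
\[
A_{(X,D)}(F)=e\cdot A_{(Y,B)}(E),\qquad S_G(F)=e\cdot S(E),\qquad j^G(F)=e|G|\cdot j(E).
\]
The first follows from the crepancy $K_X+D=\sigma^*(K_Y+B)$ together with linearity of log discrepancy in the valuation. The other two come from the identity
\[
\pi^*(-K_X-D)-x(G\cdot F)=(\sigma')^*\bigl(\pi_Y^*(-K_Y-B)-\tfrac{x}{e}E\bigr),
\]
the volume scaling $(-K_X-D)^n=|G|\cdot(-K_Y-B)^n$ via pushforward along the finite map $\sigma'$, and the substitution $y=x/e$ in the defining integrals. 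Hence $\beta^G(F)=e|G|\cdot\beta(E)$ and $\delta_G(X,D)=\delta(Y,B)$, so Theorem \ref{val}(1)--(2) immediately yields the K-semistability and uniform K-stability equivalences.

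For the non-uniform K-stability statement, by Theorem \ref{val}(3) I must additionally show that $F$ is $G$-dreamy iff $E$ is dreamy. The pullback identity above identifies the $e$-th Veronese subring of $R_{X,F}:=\bigoplus_{k,j}H^0\bigl(X',k\pi^*(-K_X-D)-j(G\cdot F)\bigr)$ with the pullback of $R_{Y,E}:=\bigoplus_{k,j}H^0\bigl(Y',k\pi_Y^*(-K_Y-B)-jE\bigr)$, and by the projection formula together with $(\sigma'_{*}\OS_{X'})^G=\OS_{Y'}$ the $G$-invariants of this Veronese subring equal $R_{Y,E}$. Finite generation then transfers in both directions using Hilbert--Noether finiteness for invariants under finite group actions and the equivalence between finite generation of a graded algebra and of any of its Veronese subalgebras.

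The main technical point I anticipate is this last finite-generation correspondence, both in setting up the $G$-equivariant models carefully and in verifying the Veronese argument, though these are essentially bookkeeping once the setup is fixed. The remainder of the proof is a direct application of the valuative criterion via the clean relation $\delta_G(X,D)=\delta(Y,B)$.
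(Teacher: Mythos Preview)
Your route is genuinely different from the paper's. The paper does \emph{not} prove $S_G(F)=e\cdot S(E)$ for arbitrary divisors: for the ``only if'' directions it uses only the inequality $\vol_{X'}\bigl((\sigma')^*M\bigr)\geq|G|\cdot\vol_{Y'}(M)$ from \cite[Lemma~4.1]{Fuj19}, yielding $\beta(E)\geq\tfrac{1}{e|G|}\beta^G(F)$ and hence $\delta(Y,B)\geq\delta_G(X,D)$; for the ``if'' direction of (1) it abandons valuations entirely and instead takes the $G$-quotient of a $G$-equivariant test configuration and compares Futaki invariants via the intersection formula~\eqref{eq:Futint}. For (2) ``if'' it again passes through test configurations, obtaining the equality $\beta^G(F)=e|G|\,\beta(E)$ only for $G$-dreamy $F$. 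The remark after the proposition explicitly leaves $\delta_G(X,D)=\delta(Y,B)$ as a conditional statement. Your purely valuative approach, establishing this equality unconditionally, is cleaner and yields a stronger conclusion---but two steps are not yet justified.

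\medskip
\textbf{Volume under finite pullback.} Your identity $S_G(F)=e\cdot S(E)$ requires $\vol_{X'}\bigl((\sigma')^*M\bigr)=|G|\cdot\vol_{Y'}(M)$ for $M=\pi_Y^*(-K_Y-B)-\tfrac{x}{e}E$, which is typically only big. The justification you give---``$(-K_X-D)^n=|G|(-K_Y-B)^n$ via pushforward''---is the nef case; for merely big $M$ the degree formula for top self-intersection does not compute the volume. The equality does hold for finite surjective $\sigma'$ between normal varieties (one proves both inequalities via Fujita approximation, or bounds $h^0(X',(\sigma')^*mM)=h^0(Y',(\sigma')_*\cO_{X'}\otimes\cO(mM))$ using that $(\sigma')_*\cO_{X'}$ has rank $|G|$ with torsion cokernel supported in lower dimension), but you must supply this argument.

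\medskip
\textbf{The dreamy correspondence.} For non-uniform K-stability you need ``$E$ dreamy $\Rightarrow F$ $G$-dreamy''. Your chain is: $R_{Y,E}$ f.g.\ $\Rightarrow$ its preimage (the $e$-th Veronese $V$ of $R_{X,F}$) f.g.\ $\Rightarrow$ $R_{X,F}$ f.g. Neither implication is automatic. For the first, $V^G=R_{Y,E}$ f.g.\ and $V$ integral over $V^G$ do not alone give $V$ f.g.; you need $V$ module-finite over $V^G$, e.g.\ by embedding $V$ in the normalization of $R_{Y,E}$ in $\mathrm{Frac}(V)$. For the second, ``Veronese f.g.\ $\Rightarrow$ ring f.g.'' is false in general (take $R_0=\bC$, $R_{2m}=\bC\cdot t^m$, $R_{2m+1}=\bC\cdot a_m$ with all products into odd degree zero); it can be rescued here because $R_{X,F}$ is a domain integral over $V$, but again this must be argued. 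The paper sidesteps this by only ever using the easy direction ``$F$ $G$-dreamy $\Rightarrow E$ dreamy'' and obtaining the needed equality of $\beta$-invariants via Futaki invariants of the associated test configurations.
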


\begin{proof}
(1) 
For the ``only if'' part, We mostly follow the same idea as in   \cite[Proof of Corollary 1.7]{Fuj19}. Assume $(X,D)$ is  $G$-equivariantly K-semistable.  For any divisor $F$ over $Y$, it suffices to show that $\beta(F)\geq 0$. Let $f:~Y'\to Y$ be a birational morphism such that $F$ is a divisor on $Y'$. Consider the following diagram
\begin{center}
\begin{tikzcd}
X'\arrow[d,"g"]
 \arrow[r,"\sigma'"] & Y' \arrow[d,"f"]\\
 X \arrow[r,"\sigma"] & Y,
\end{tikzcd}
\end{center}
where $X'$ is the normalization of the fiber product $X\times_Y Y'$. Note that there is a natural $G$-action on $X\times_Y Y'$ and hence on $X'$, such that $\sigma'$ is the quotient map. Let
$$
\sigma'^\ast F=\sum_{i=1}^m e_{F_i'}F_i',
$$
where $e_{F_i'}$ is the ramification index of $F_i'$. Since $\sigma'$ is the quotient map of the $G$-action, all the $e_{F_i'}$'s are the same which we denote by $e_F$. By \cite[Proof of Proposition 5.20]{kollar2008birational}, we have the following relation between log discrepancies
$$
e_F A_{(Y,B)}(F)=A_{(X,D)}(F_i').
$$
Also note that
$$
\sigma'^\ast (f^\ast (-K_Y-B)-xF)=g^\ast (-K_X-D)-x e_F\sum_{i=1}^m F_i',
$$
and by \cite[Lemma 4.1]{Fuj19}, we know that
$$
\vol(\sigma'^\ast (f^\ast (-K_Y-B)-xF)\geq \deg(\sigma') \vol(f^\ast (-K_Y-B)-xF),
$$
where $\deg(\sigma')=|G|$.
Therefore, we have
\begin{align*}
\beta(F)&=A_{(Y,B)}(F)(-K_Y-B)^n-\int_0^\infty\vol(f^\ast (-K_Y-B)-xF)\,dx\\
&\geq \frac{1}{|G|}\left(A_{(Y,B)}(F)(-K_X-D)^n-\int_0^\infty\vol\left(\sigma'^\ast (f^\ast (-K_Y-B)-xF)\right)\,dx\right)\\
&= \frac{1}{e_F|G|}\left(A_{(X,D)}(F_i')(-K_X-D)^n-\int_0^\infty\vol\left(g^\ast (-K_X-D)-x\sum_{i=1}^m F_i'\right)\,dx\right)\\
&=\frac{\beta^G(F_1')}{e_F|G|}\geq 0,
\end{align*}
where the last inequality follows from  \ref{val}. This finishes the proof of the ``only if'' part.


Next we treat the ``if'' part. 
Assume $(Y,B)$ is K-semistable. Let $(\X,\cD;\LL)$ be any $G$-equivariant test configuration of $(X,D)$. Since $G$ is finite, we may take $r:=|G|$ such that any stabilizer subgroup $G_x<G$ of $x\in X$ acts trivially on $\cL_x^{\otimes r}$. By taking the quotient map $\sigma_{\cX}:~\X\to \Y:=\X/G$, we get a test configuration $(\Y,\B;\M)$ of $(Y,B)$, where $\B$ is the Zariski closure of $B$ on the general fibers such that $K_\X+\cD=\sigma_{\cX}^\ast (K_\Y+\B)$, and $\M$ is the descent line bundle of $\cL^{\otimes r}$ to $\Y$.

Note that after compactifying the test configurations over $\PP^1$, we have a quotient map $\sigma_{\ocX}:\ocX\to\ocY$ which naturally extends $\sigma_{\cX}$. Moreover, we have
\begin{align*}
K_X+D=\sigma^\ast(K_Y+B),\qquad
\overline{\LL}^{\otimes r}=\sigma_{\ocX}^\ast \overline{\M},\qquad
K_{\overline{\X}/\PP^1}+\cD=\sigma_{\ocX}^\ast (K_{\overline{\Y}/\PP^1}+\B).
\end{align*}
By the intersection formula of generalized Futaki invariants \eqref{eq:Futint} and K-semistability of $(Y,B)$, we have 
$$
0\leq \Fut(\Y,\B;\M)=\Fut(\X,\cD;\LL^{\otimes r})=\Fut(\cX,\cD;\cL).
$$
Thus $(X,D)$ is $G$-equivariantly K-semistable. This finishes the proof of the ``if'' part. Hence we finish the proof of part (1). 
\medskip


(2) We first treat the ``only if'' part. Indeed, from the proof of the``only if'' part of part (1), for any prime divisor $F$ over $Y$, we have
$$
S_G(F_1')\geq e_F S(F).
$$
Therefore, if $(X,D)$ is $G$-equivariantly (uniformly) K-stable, then $(Y,B)$ is (uniformly) K-stable. 

For the ``if'' part, 
take $G$-dreamy prime divisor $F_1'$ over $(X,D)$ and let $F_1'\ldots,F_m'$ form the orbit of $F_1'$ under $G$-action. Let $F$ be the image of $F_i'$'s under the quotient map $\sigma$, and $e_F$ the ramification index of all $F_i'$'s. Consider the $G$-invariant filtration on $R_j=H^0(X,-j(K_X+D))$ determined by $F_1'$:
$$
\F_{(X,D)}^xR_j=H^0\left(X',g^\ast (-jK_X-jD)-xe_F\sum_{i=1}^m F_i'\right).
$$
If we take $G$-invariant sections of $\F_{(X,D)}^x R_j$, we get sections of the following filtration on $V_j=H^0(Y,-j(K_Y+B))$ determined by $F$:
$$
\F_{(Y,B)}^xV_j=H^0\left(Y',f^\ast (-jK_Y-jB)-xF\right).
$$
Without loss of generality, we may assume the above two filtrations are finitely generated in degree one of the grading $j$.
Then taking the quotient under the $G$-action of the $G$-equivariant test configuration of $(X,D)$
$$
\left(\proj_{\Aff^1}t^{-p}\F_{(X,D)}^pR_j,\cD;\OS(1)\right),
$$
we get the test configuration of $(Y,B)$ induced by $F$:
$$
\left(\proj_{\Aff^1}t^{-p}\F_{(Y,B)}^pV_j,\cB;\OS(1)\right),
$$
The same argument in the proof of the ``if'' part of part (1) gives the equality of generalized Futaki invariants of the above two test configurations. From the computation of generalized Futaki invariants of test configurations induced by dreamy divisors in \cite{Fuj16} (and in \cite{Zhu19} for the $G$-equivariant case), we know that $\beta^G(F_1')=e_F|G|\beta(F)$. Note that since $e_F A_{(Y,B)}(F)=A_{(X,D)}(F_i')$, then we also have $S_G(F_1')= e_F S(F)$. Then by the valuative criteria in Theorem \ref{val}, we know that if $(Y,B)$ is (uniformly) K-stable, then $(X,D)$ is $G$-equivariantly (uniformly) K-stable. 
\end{proof}

\begin{remark}
Since $S_G(F_1')\geq e_F S(F)$, we know that $\delta(Y,B)\geq \delta_G(X,D)$. When either $\delta_G(X,D)$ or $\delta(Y,B)$ can be approximated by $G$-dreamy divisors over $X$ or dreamy divisors over $Y$ respectively (for example, the latter is true when $\delta(Y,B)\leq 1$ by \cite[Theorem 4.3]{BLZ19}), then by the argument  in the proof of Proposition \ref{quotient}(2), we have $\delta(Y,B)= \delta_G(X,D)$, as is illustrated in Example \ref{ex_fujita} and Example \ref{ex_triangle}.
\end{remark}

\begin{remark}
Note that the similar statement to Proposition \ref{quotient} in the K-polystable case is implied by Theorem \ref{thm:finitecover}(2) which is proven in Section \ref{sec:proofs}. Our proof uses two very recent results: the solution of uniform Yau-Tian-Donaldson conjecture \cite{li2019uniform}, and the reductivity of automorphism groups of K-polystable log Fano pairs \cite{ABHLX19}.
\end{remark}


Next we discuss a criterion for $G$-equivariant K-semistability in terms of normalized volumes.
Choose a positive integer $l$ such that $L:=-l(K_X+D)$ is an ample Cartier divisor on $X$ where each stabilizer group $G_x$ acts trivially on $L_x$ for any $x\in X$. Let $Z:=C(X,L)$ be the affine cone over $X$ with polarization $L$. Denote by $o\in Z$ the cone vertex. Let $\tD$ be the $\bQ$-divisor on $Z$ as the Zariski closure of the pull back of $D$ under the projection $Z\setminus\{o\}\to X$. Then from \cite[Section 3.1]{Kol13} we know that $(Z,\tD)$ is a klt singularity. Let $v_0$ be the canonical valuation corresponding to the exceptional divisor of the blow-up of $Z$ at $o$.

\begin{proposition}\label{Kc}
Under the above notation, $(X,D)$ is $G$-equivariantly K-semistable if and only if the normalized volume $\widehat{\vol}_{(Z,\tD),o}$ is minimized at the canonical valuation $v_0$ among all $G\times \G_m$-invariant Koll\'ar components over $o\in (Z,\tD)$.
\end{proposition}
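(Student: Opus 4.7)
My plan is to combine Theorem \ref{Gspec} with the cone correspondence of Li--Xu \cite{LX16, LLX18} between special test configurations and Koll\'ar components. By Theorem \ref{Gspec}, $(X,D)$ is $G$-equivariantly K-semistable if and only if $\Fut(\cX,\cD;\cL) \geq 0$ for every $G$-equivariant special test configuration $(\cX,\cD;\cL)$. Hence it suffices to establish two facts: (a) a bijection between $G$-equivariant special test configurations of $(X,D;L)$ and $G\times\G_m$-invariant Koll\'ar components over $o \in (Z,\tD)$, under which the canonical valuation $v_0$ corresponds to the trivial (product) test configuration; and (b) under this correspondence the sign of $\Fut(\cX,\cD;\cL)$ agrees with the sign of $\widehat{\vol}_{(Z,\tD),o}(\ord_S) - \widehat{\vol}_{(Z,\tD),o}(v_0)$.

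For (a), the starting point is the Li--Xu correspondence in the non-equivariant setting: a $\G_m$-invariant Koll\'ar component $S$ over $o$ determines a special degeneration of $(X,D)$ via the associated graded ring of the filtration on the section ring of $-(K_X+D)$ induced by $\ord_S$, and conversely the exceptional divisor of the natural blow-up of the cone over a special central fiber recovers a $\G_m$-invariant Koll\'ar component. Every step in this construction (filtrations, Rees and associated graded algebras, normalizations, and the cone/affine-cone construction) is canonical; since $L$ has been chosen so that $G$ acts linearly on all fibers, $G$-invariance of $S$ transfers to $G$-equivariance of the associated special test configuration and vice versa.

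For (b), I would invoke the Li--Xu volume-Futaki comparison formula (see e.g.\ \cite[Theorem 3.7]{LLX18} and its proof): for a $\G_m$-invariant Koll\'ar component $S$ over $o$ with associated special test configuration $(\cX_S,\cD_S;\cL_S)$, there is a positive constant $c(S)>0$ such that
$$
\widehat{\vol}_{(Z,\tD),o}(\ord_S) - \widehat{\vol}_{(Z,\tD),o}(v_0) = c(S)\cdot \Fut(\cX_S,\cD_S;\cL_S).
$$
Since $v_0$ itself is $G\times\G_m$-invariant (the blow-up of the cone vertex being canonical), combining (a) and (b) with Theorem \ref{Gspec} yields the claimed equivalence. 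The main obstacle is the $G$-equivariant enhancement of (a): one must verify that the Li--Xu construction commutes with the $G$-action on both sides, and in particular that normalization and the passage to associated graded algebras preserve the $G$-action. This is bookkeeping rather than a serious difficulty—finiteness of $G$ ensures that all universal constructions carry a canonical induced $G$-action—but it must be executed in both directions.
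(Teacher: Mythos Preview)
Your strategy is sound in outline but differs from the paper's proof, and the formula you assert in (b) is not literally correct.

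The paper does not carry out a $G$-equivariant enhancement of the Li--Xu cone correspondence at all. Instead it passes to the quotient immediately: the cone $W=Z/G$ is the affine cone $C(Y,M)$ over $(Y,B)=(X,D)/G$, and by \cite[Lemmas 2.13 and 2.14]{LX16} pullback gives a bijection between $G\times\G_m$-invariant Koll\'ar components over $o\in(Z,\tD)$ and $\G_m$-invariant Koll\'ar components over the vertex $o'\in(W,\tB)$, preserving normalized volume up to the factor $|G|$. The non-equivariant statement \cite[Proposition 4.4 and Theorem 4.5]{LX16} applied to $(Y,B)$ then says that $v_0'$ minimizes $\widehat{\vol}_{(W,\tB),o'}$ among $\G_m$-invariant Koll\'ar components iff $(Y,B)$ is K-semistable, and one concludes via Proposition \ref{quotient} rather than Theorem \ref{Gspec}. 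This sidesteps any need to verify that Rees algebras, normalizations, etc.\ are compatible with the $G$-action.

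Regarding your (b): the identity
\[
\widehat{\vol}_{(Z,\tD),o}(\ord_S) - \widehat{\vol}_{(Z,\tD),o}(v_0) = c(S)\cdot \Fut(\cX_S,\cD_S;\cL_S)
\]
is not what \cite{LX16,LLX18} prove. What holds is that $\Fut(\cX_S,\cD_S;\cL_S)$ is a positive multiple of the \emph{derivative} of $\widehat{\vol}$ at $v_0$ along the ray of quasi-monomial valuations toward $\ord_S$; convexity of $\widehat{\vol}$ along such rays then converts non-negativity of every such derivative into minimality of $v_0$. So the sign equivalence you need is true, but not via the difference formula you wrote. You should either cite \cite[Theorem 4.5]{LX16} directly (restricted to the $G\times\G_m$-invariant locus, which is where your (a) does the work) or state the derivative-plus-convexity argument correctly.
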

\begin{proof}
Note that there is a natural $G$-action on the cone $Z$ induced from the $G$-action on $(X,D)$ and the $G$-linearization of $L$. Taking the quotient map, we get $\tilde{\sigma}:~Z\to W:=Z/G$, where $W=C(Y,M)$ is the affine cone over $Y$ with the polarization $M$ as the descent ample line bundle of $L$ to $Y$. Let $o'$ be the vertex of $W$ and $v'_0$ be the canonical valuation on $W$ corresponding to the exceptional divisor of the blow-up at $o'$. Let $\tB$ be the $\bQ$-divisor on $W$ as the Zariski closure of the pull back of $B$ under the projection $W\setminus\{o'\}\to Y$. Then it is clear that $K_Z+\tD=\tilde{\sigma}^*(K_W+\tB)$.

By \cite[Lemma 2.13]{LX16}, we know that there is a one-to-one correspondence via pullback between $G\times\bG_m$-invariant Koll\'ar components $S$ over $o\in (Z,\tD)$ and $\bG_m$-invariant Koll\'ar components $S'$ over $o'\in (W,\tB)$.
By \cite[Lemma 2.14]{LX16}, we have
$$
|G|\cdot\widehat{\vol}_{(W,\tB),o'}(\ord_{S'})=\widehat{\vol}_{(Z,\tD),o}(\ord_S),
$$
for any $\bG_m$-invariant Koll\'ar component $S'$ over $o'\in (W, \tB)$ and $S$ the pullback of $S'$ to $o\in (Z,\tD)$. It is clear that the pull back of $v_0'$ is precisely $v_0$.
Therefore, the proposition follows from \cite[Proposition 4.4 and Theorem 4.5]{LX16} together with Proposition \ref{quotient}.
\end{proof}

\begin{prop}\label{prop:specialquot}
Under the above notation, any $G$-equivariant special test configuration $(\cX,\cD)$ of $(X,D)$ produces a special test configuration $(\cY,\cB)$ of $(Y,B)$ by taking quotient of $G$-action. Two $G$-equivariant special test configurations of $(X,D)$ have isomorphic quotients if and only if they are isomorphic.
Conversely, for any special test configuration $(\cY,\cB)$ of $(Y,B)$, there exists a positive integer $d$ such that the test configuration $(\cY^{(d)},\cB^{(d)}):= (\cY,\cB)\times_{\bA^1}\bA^1$ as a base change under the map $\bA^1\to \bA^1$ by $t\mapsto t^d$ is obtained this way.
\end{prop}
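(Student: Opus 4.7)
\emph{Forward direction (quotient construction).} Given a $G$-equivariant special test configuration $(\cX, \cD; \cL)$ of $(X, D; L)$, I would set $\cY := \cX/G$ with quotient map $\sigma_\cX: \cX \to \cY$. Since $G$ and $\bG_m$ commute inside $\aut(\cX)$, the $\bG_m$-action descends to $\cY$, and the induced morphism $\cY \to \bA^1$ is automatically flat, projective, and $\bG_m$-equivariant. Choose $r$ divisible by $|G|$ so that stabilizer subgroups act trivially on $\cL^{\otimes r}$, and let $\cM$ be the descent of $\cL^{\otimes r}$ to $\cY$; this $\cM$ is $\pi$-ample. Define $\cB$ by the canonical bundle formula $K_\cX + \cD = \sigma_\cX^*(K_\cY + \cB)$, which by the discussion at the start of this section restricts on the general fiber to the prescribed boundary $B$. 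The plt condition on $(\cX, \cX_0 + \cD)$ transfers to plt on $(\cY, \cY_0 + \cB)$ by the Galois-descent formalism of \cite[Proposition 5.20]{KM98}, and the proportionality $\cM \sim_\bQ -lr(K_{\cY/\bA^1} + \cB)$ follows from the analogous one on $\cX$. Thus $(\cY, \cB; \cM)$ is a special test configuration of $(Y,B)$.

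\emph{Uniqueness of the lift.} The ``if'' direction is immediate. For the ``only if'', suppose a $\bG_m$-equivariant isomorphism $\phi: \cX_1/G \xrightarrow{\sim} \cX_2/G$ is given. Each $\cX_i$ is normal and finite over $\cX_i/G$, and the general-fiber identification $\cX_i \setminus (\cX_i)_0 \cong X \times (\bA^1 \setminus 0)$ equips each with the same function field $K(X \times (\bA^1 \setminus 0))$ extending $K(\cX_i/G)$. Since the normalization of a scheme in a fixed finite extension of its function field is unique, $\phi$ lifts canonically to a $G \times \bG_m$-equivariant isomorphism $\tilde\phi: \cX_1 \xrightarrow{\sim} \cX_2$ sending $\cD_1$ to $\cD_2$.

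\emph{Converse direction with base change.} Given a special TC $(\cY, \cB; \cM)$ of $(Y,B;M)$, let $\cX'$ be the normalization of $\cY$ in the Galois extension $K(\cY) \cdot K(X) = K(X \times (\bA^1 \setminus 0))$. Then $\cX' \to \cY$ is a finite $G$-Galois cover, and the $\bG_m$-action on the general fiber $X \times (\bA^1 \setminus 0)$ extends uniquely to $\cX'$ by normality. Let $e$ denote the ramification index of $\cX' \to \cY$ at the generic point $\eta_{\cY_0}$; the scheme-theoretic central fiber of $\cX' \to \bA^1$ equals $e \cdot \cX'_{0,\red}$, which is non-reduced unless $e=1$. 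I would take $d = e$ (or a suitable multiple) and perform the normalized base change $\cY^{(d)} \to \bA^1$ under $t \mapsto t^d$; this remains a special TC with reduced central fiber. Constructing $\cX^{\mathrm{new}}$ as the normalization of $\cY^{(d)}$ in $K(\cY^{(d)}) \cdot K(X)$ and running the DVR computation below, the central fiber of $\cX^{\mathrm{new}} \to \bA^1$ becomes reduced and irreducible. Plt then transfers from $(\cY^{(d)}, \cY^{(d)}_0 + \cB^{(d)})$ to $(\cX^{\mathrm{new}}, \cX^{\mathrm{new}}_0 + \cD)$ via \cite[Proposition 5.20]{KM98}, and setting $\cL := \sigma_{\cX^{\mathrm{new}}}^* \cM^{(d)}$ provides a $\pi$-ample polarization with the required $\bQ$-linear proportionality to $-(K_{\cX^{\mathrm{new}}/\bA^1}+\cD)$.

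\emph{Main obstacle.} The crux is the local DVR calculation verifying that base change by $t \mapsto t^e$ really does produce a reduced central fiber on $\cX^{\mathrm{new}}$. Working at the DVR $R := \mathcal{O}_{\cY, \eta_{\cY_0}}$ with uniformizer $t$, the original cover contributes $S \supset R$ with $t = u \cdot \pi_X^e$ for some unit $u$ and uniformizer $\pi_X$ of $S$. After adjoining $s$ with $s^d = t$ and normalizing the resulting ring, one sees that $s/\pi_X$ satisfies $(s/\pi_X)^d = u$ and thus becomes a unit in the normalization, making $\pi_X$ a uniformizer; this ``absorbs'' the ramification. Tracking the integer $d$ carefully and combining the local computation with Galois descent of plt singularities then yields the desired correspondence on special test configurations.
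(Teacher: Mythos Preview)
Your forward direction and uniqueness argument are both fine. The forward direction essentially matches the paper, and your uniqueness argument via normalization in a fixed function field is a valid alternative to the paper's valuation-theoretic argument (which shows that the two restricted valuations $\ord_{\cX_0}|_{\bC(X)}$ and $\ord_{\cX_0'}|_{\bC(X)}$ coincide, using that they agree on $\bC(Y)=\bC(X)^G$ and are $G$-invariant).

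The converse direction, however, has a genuine gap. Your DVR computation at the generic point of $\cY_0^{(d)}$ shows that $\sigma:\cX^{\mathrm{new}}\to\cY^{(d)}$ is unramified in codimension one over $\cY_0^{(d)}$, i.e.\ that $\cX_0^{\mathrm{new}}$ is \emph{reduced}. It does not show that $\cX_0^{\mathrm{new}}$ is \emph{irreducible}: the number of components over $\cY_0^{(d)}$ is governed by the decomposition group, not the inertia group, and base change by $t\mapsto t^e$ absorbs ramification but can only increase the number of components. Since $\cX^{\mathrm{new}}\to\bA^1$ is proper with connected general fibre, $\cX_0^{\mathrm{new}}$ is connected; thus if it has two components $E_1,E_2$ they must meet, and the exceptional divisor of the blow-up of $E_1\cap E_2$ has discrepancy $-1$, so $(\cX^{\mathrm{new}},\cX_0^{\mathrm{new}}+\cD)$ is \emph{not} plt. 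In particular, your appeal to \cite[Proposition~5.20]{KM98} is unjustified in this direction: that result transfers klt and lc, but plt does not in general ascend along crepant finite covers, precisely because an exceptional divisor upstairs can dominate a non-exceptional divisor downstairs.

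The paper circumvents this by a genuinely different construction. It passes to the affine cones $Z=C(X,L)$ and $W=C(Y,M)$, where the cover $Z\to W$ is Galois with a single $G$-fixed point over the vertex. Using \cite[Lemma~3.4]{LWX18} it realises $\ord_{\cY_0}$ via a $\bG_m$-invariant Koll\'ar component $S_k'$ over the vertex of $W$, and then invokes \cite[Lemma~2.13]{LX16} to lift it to a $G\times\bG_m$-invariant Koll\'ar component $S_k$ over the vertex of $Z$. By definition a Koll\'ar component is a \emph{prime} divisor, so $\ord_{S_k}$ is a genuine valuation; the Rees construction from this valuation filtration then has integral associated graded, forcing the central fibre of the resulting $(\cX,\cD)$ to be irreducible and the pair to be plt. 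The integer $d$ arises as the ramification index $d_k$ of the Koll\'ar-component lift and need not equal your $e$; identifying the quotient with $(\cY^{(d_k)},\cB^{(d_k)})$ is then done via \cite[Lemma~5.17]{BHJ17}. Your normalization approach could in principle be repaired by proving that some $d$ makes $\cX_0^{\mathrm{new}}$ irreducible, but this is exactly the nontrivial content supplied by the cone and Koll\'ar-component machinery.
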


\begin{proof}
For simplicity we omit the polarization for special test configurations.
Suppose $(\cX,\cD)$ is a $G$-equivariant special test configuration of $(X,D)$. Then by taking quotient of $G$, we obtain a test configuration $(\cY,\cB)$ of $(Y,B)$ where $\sigma_{\cX}:\cX\to \cY=\cX/G$ is the quotient map and $\cB$ satisfies $K_{\cX}+\cD=\sigma_{\cX}^*(K_{\cY}+\cB)$. Since $(\cX,\cD+\cX_0)$ is plt,  by \cite[Proposition 5.20]{KM98} we know that $(\cY,\cB+\cY_0)$ is also plt which implies that $(\cY,\cB)$ is special.

Next we prove the injectivity of this quotient construction. Suppose $(\cX,\cD)$ and $(\cX',\cD')$ are two $G$-equivariant special test configurations of $(X,D)$ with isomorphic quotient $(\cY,\cB)\cong (\cY',\cB')$.  Consider the two $G$-invariant valuations $v:=\ord_{\cX_0}|_{\bC(X)}$ and $v':=\ord_{\cX_0'}|_{\bC(X)}$. Then it is clear that 
\[
v|_{\bC(Y)}=\ord_{\cY_0}|_{\bC(Y)}=\ord_{\cY_0'}|_{\bC(Y)}=v'|_{\bC(Y)}.
\]
Since $\bC(Y)=\bC(X)^G$, for any rational function $f\in \bC(X)$ we have that $\Pi_{g\in G}f\circ g \in \bC(Y)$. Hence
\[
v(f)=\frac{1}{|G|}v (\Pi_{g\in G}f\circ g)=\frac{1}{|G|}v' (\Pi_{g\in G}f\circ g)=v'(f).
\]
Thus $v=v'$ which implies that $(\cX,\cD)\cong (\cX',\cD')$ by \cite[Lemma 5.17]{BHJ17}.

Conversely, let $(\cY,\cB)$ be a non-trivial special test configuration of $(Y,B)$. We will construct $(\cX,\cD)$ using the cone construction from \cite{Li17, LWX18}. By \cite[Lemma 4.1]{BHJ17}, we have
$\ord_{\cY_0}|_{\bC(Y)}=b\cdot \ord_F$
for some prime divisor $F$ over $Y$ and $b\in \bZ_{>0}$. Recall that $W=C(Y,M)$ is the affine cone over $Y$ with  $M=-l(K_Y+B)$ an ample Cartier divisor. Let $Y_0$ be the exceptional divisor of the canonical blow-up of $W$ at the vertex $o'$. Let $F_{\infty}$ be the prime divisor over $W$ as the pull-back of $F$ over $Y$ under the projection $W\setminus \{o'\}\to Y$. For $k\gg 1$, let $w_k$ be the quasi-monomial valuation on $W$ of weights $(1-\frac{l b A_{Y,B}(F)}{k}, \frac{b}{k})$ along $(Y_0, F_\infty)$. For simplicity, assume that $k$ is coprime to $b$. Then by \cite[Lemma 3.4]{LWX18} we know that $kw_k=\ord_{S_k'}$ where $S_k'$ is a $\bG_m$-invariant Koll\'ar component over $o'\in (W, \tB)$. Pulling back $S_k'$ to $(Z,\tD)$, by \cite[Lemma 2.13]{LX16} we obtain a $G\times \bG_m$-invariant Koll\'ar component $S_k$ over $(Z,\tD)$ such that $\ord_{S_k}|_{\bC(W)}=d_k\cdot\ord_{S_k'}$ for some $d_k\in \bZ_{>0}$. Note that $Z=\spec(\oplus_{j=0}^\infty R_j)$ with $R_j:=H^0(X,jL)$. Then the valuation $\ord_{S_k}$ defines a $\bZ$-filtration on $R_\bullet$ by
$$
\F^p R_j=\{f\in R_j\mid \ord_{S_k}(f)\geq p\}.
$$
By taking the Rees construction, we denote 
\begin{equation}\label{eq:cZ}
 \cZ:= \spec_{\bC[t]} \bigoplus_{p\in\bZ}\bigoplus_{j\in\bZ_{\geq 0}}t^{-p}\cF^{p}R_j.
\end{equation}
It is clear that $\cZ\setminus\cZ_0\cong Z\times(\bA^{1}\setminus\{0\})$. 
Let $\tcD$ be the Zariski closure of $\tD\times(\bA^1\setminus\{0\})$ in $\cZ$. 
Then from \cite[Section 3]{LWX18} we know that $(\cZ,\tcD,\xi;\eta)\to\bA^1$ provides a $G$-equivariant 
special test configuration of the log Fano cone $(Z,\tD,\xi)$ where $\xi$ corresponds to the grading of $k$ and 
$\eta:=t\partial_t$. 
By taking $\proj$ from \eqref{eq:cZ} with respect to the grading of $j$, we obtain a $G$-equivariant special test configuration
$(\cX,\cD)$ of $(X,D)$. 
From the above construction, we have $\ord_{S_k}|_{\bC(X)}=\ord_{\cX_0}|_{\bC(X)}$. Denote by $(\cY',\cB')$ the quotient of $(\cX,\cD)$ by the $G$-action. Then we have 
\[
\ord_{\cY_0'}|_{\bC(Y)}=\ord_{\cX_0}|_{\bC(Y)}=d_k\ord_{S_k'}|_{\bC(Y)}=kd_k w_k|_{\bC(Y)}=d_k b\cdot \ord_F=d_k\cdot \ord_{\cY_0}|_{\bC(Y)}.
\]
Thus by \cite[Lemma 5.17]{BHJ17} we know that $(\cY',\cB')\cong (\cY^{(d_k)}, \cB^{(d_k)})$. The proof is finished. 
\end{proof}

Now we would like to prove Theorem \ref{Gspec}.
\begin{proof}[Proof of Theorem \ref{Gspec}]
Assume to the contrary that $(X,D)$ is $G$-equivariantly K-unstable. By Proposition \ref{quotient}, we know that $(Y,B)$ is K-unstable. Hence by \cite{LX14} there exists a special test configuration $(\cY,\cB)$ of $(Y,B)$ such that $\Fut(\cY,\cB)<0$. 
Then Proposition \ref{prop:specialquot} implies that there exists a $\bG$-equivariant special test configuration $(\cX,\cD)$ of $(X,D)$ and $d\in \bZ_{>0}$ such that $(\cY^{(d)},\cB^{(d)})$ is obtained by quotient of the $G$-action on $(\cX,\cD)$.
By the intersection formula of generalized Futaki invariants \eqref{eq:Futint}, we have 
\[
\Fut(\cX,\cD)=\Fut(\cY^{(d)},\cB^{(d)})=d\cdot \Fut(\cY,\cB)<0.
\]
This is a contradiction.
\end{proof}

\section{Proof of main theorems}\label{sec:proofs}
In this section, we prove our main results Theorems \ref{eq}, \ref{thm:finitecover}, and Corollary \ref{cor:tian}.

The following theorem in \cite{BL18b} allows us to perturb boundaries for a K-semistable Fano pair, and we will use it repeatedly throughout the section. Here we state the version of the theorem for log Fano pairs.
\begin{theorem}[{\cite[Theorem C]{BL18b}}]\label{perturb}
Let $(X,D)$ be a log Fano pair. We have
\begin{align*}
&\min\{1, \delta(X,D)\}\\
=&\sup\{ \beta\in (0,1]|(X,D+(1-\beta )\Delta)~\text{is K-semistable for some}~\Delta\in |-K_X-D|_\Q\}\\
=&\sup\{ \beta\in (0,1]|(X,D+(1-\beta) \Delta)~\text{is uniformly K-stable for some}~\Delta\in |-K_X-D|_\Q\}.
\end{align*}
\end{theorem}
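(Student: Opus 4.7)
The plan is to work directly with the valuative criterion of Theorem \ref{val} and exploit the fact that both the log discrepancy and the expected vanishing order $S$ transform cleanly under boundary perturbation. For any $\beta \in (0,1]$ and any effective $\Delta \in |-K_X-D|_\Q$ one has
\[
A_{(X,D+(1-\beta)\Delta)}(F) = A_{(X,D)}(F) - (1-\beta)\ord_F(\Delta),
\]
while the relation $-K_X-D-(1-\beta)\Delta \sim_\Q \beta(-K_X-D)$ together with the substitution $x \mapsto \beta x$ in the volume integral defining $S$ yields $S_{(X,D+(1-\beta)\Delta)}(F) = \beta\, S_{(X,D)}(F)$. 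Consequently
\[
\delta\bigl(X, D+(1-\beta)\Delta\bigr) = \inf_F \frac{A_{(X,D)}(F) - (1-\beta)\ord_F(\Delta)}{\beta\, S_{(X,D)}(F)}.
\]

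The easy direction $\sup \leq \min\{1,\delta(X,D)\}$ then follows at once: $\beta \leq 1$ is built in, and K-semistability of $(X, D+(1-\beta)\Delta)$ combined with $\ord_F(\Delta)\geq 0$ forces $A_{(X,D)}(F) \geq \beta\, S_{(X,D)}(F)$ for every prime divisor $F$ over $X$, giving $\delta(X,D) \geq \beta$.

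For the hard direction I would fix $\beta < \min\{1, \delta(X,D)\}$, choose an auxiliary $\beta'$ with $\beta < \beta' < \min\{1, \delta(X,D)\}$, and take $\Delta$ to be an $m$-basis type divisor $D_m$ for $m \gg 0$. The Blum--Jonsson limit $\delta_m(X,D) \to \delta(X,D)$ ensures that for $m$ large every $m$-basis type divisor $D_m$ satisfies $A_{(X,D)}(F) \geq \beta'\,\ord_F(D_m)$ for all $F$, while the structural bound $\ord_F(D_m) \leq S_m(F)$ with $S_m(F) \to S_{(X,D)}(F)$ is automatic. Plugging these into the displayed formula and splitting into the cases $\ord_F(D_m) = 0$ (ratio $\geq \delta(X,D)/\beta > 1$) and $\ord_F(D_m) > 0$ (use $\ord_F(D_m) \leq A_{(X,D)}(F)/\beta'$ to extract a lower bound on the numerator) should yield $\delta(X, D+(1-\beta)D_m) > 1$, i.e.\ uniform K-stability.

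The main obstacle will be that this direct estimate is tight precisely when $\delta(X,D)$ is close to $1$, or when $\delta(X,D) < 1$ and $\beta$ is close to $\delta(X,D)$; in those borderline regimes one needs a uniform-in-$F$ quantitative form of $S_m(F)/S_{(X,D)}(F) \to 1$, which is the technical heart of the Blum--Jonsson machinery. A cleaner workaround is a two-step perturbation: first use the construction above to produce a K-semistable perturbation at parameter $\beta$, then shift to $\beta'' := \beta - \epsilon$ and take
\[
\Delta'' := \frac{1-\beta}{1-\beta''}\Delta + \frac{\beta-\beta''}{1-\beta''}\Gamma,
\]
where $\Gamma$ is a further $m$-basis type divisor. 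The $\epsilon$-slack, absorbed against the uniform bound on $\ord_F(\Gamma)$, then upgrades K-semistability at $\beta$ to uniform K-stability at $\beta''$, giving both equalities in the theorem simultaneously and reducing the problem to the single technical input on uniform convergence of $S_m$.
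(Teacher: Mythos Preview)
The paper does not give its own proof of this statement; it is quoted verbatim from \cite{BL18b} and used as a black box. So there is nothing in the present paper to compare against, and the question is whether your argument stands on its own.

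Your easy direction is correct. The hard direction, however, breaks down when $\delta(X,D)\le 1$, and the obstacle is not the uniform convergence of $S_m$. Take any $m$-basis type divisor $D_m$ and feed your two inputs $\ord_F(D_m)\le S_m(F)$ and $\ord_F(D_m)\le A_{(X,D)}(F)/\delta_m$ into
\[
\frac{A_{(X,D)}(F)-(1-\beta)\ord_F(D_m)}{\beta\,S_{(X,D)}(F)}.
\]
Optimizing over $F$ (and letting $m\to\infty$ so that $S_m\to S$ and $\delta_m\to\delta$), both bounds collapse to the same limiting lower bound
\[
\frac{\delta-(1-\beta)}{\beta}\;=\;1+\frac{\delta-1}{\beta},
\]
which is $\le 1$ whenever $\delta\le 1$. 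This is not a convergence artifact: even if one had $S_m=S$ exactly, a basis adapted to a divisor $F$ that nearly computes $\delta$ saturates $\ord_F(D_m)=S(F)$, and then the numerator is $(\delta-1+\beta)S(F)<\beta S(F)$. So a single basis-type divisor never certifies K-semistability of the perturbed pair once $\delta<1$, and for $\delta=1$ it only gives the borderline value $1$, not $>1$. Your two-step workaround does not rescue this, because its first step (``produce a K-semistable perturbation at parameter $\beta$'') is precisely the claim that just failed.

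What is missing is that a basis-type divisor is, by design, maximally concentrated along whatever divisor you adapt it to; when $\delta\le 1$ the $\delta$-computing (or nearly computing) divisors are exactly where you cannot afford that concentration. The argument in \cite{BL18b} requires substantially more than the raw bounds $\ord_F(D_m)\le S_m(F)$ and $\text{lct}(X,D;D_m)\ge\delta_m$; one has to build $\Delta$ with controlled order along the specific valuations approximating $\delta$, not take a worst-case basis-type divisor.
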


\begin{proof}[Proof of Theorem \ref{eq}]
We first prove the K-semistable part.
Assume that the log Fano pair $(X,D)$ is $G$-equivariantly K-semistable. Using notation as before, we consider the quotient map of $(X,D)$ under $G$ as $\sigma:~(X,D)\to (Y,B)$ where $(Y,B)$ is also a log Fano pair. By Proposition \ref{quotient}, we know that $(Y,B)$ is K-semistable. Then by Theorem \ref{perturb}, for any $\epsilon> 0$, we can find $\Delta'\in |-(K_Y+B)|_\Q$ such that $(Y,B+\epsilon \Delta')$ is uniformly K-stable. By \cite[Corollary 3.5]{BX18}, we know that $(Y,B+\epsilon \Delta')$ has finite automorphism group. Hence by \cite[Corollary 1.2]{li2019uniform} we know that $(Y,B+\epsilon \Delta')$ is K\"{a}hler-Einstein. Pulling back the K\"{a}hler-Einstein metric on $(Y,B+\epsilon \Delta')$ to $X$, we get that $(X,D+\epsilon \Delta)$ is K\"{a}hler-Einstein where $\Delta:=\sigma^\ast \Delta'$. Therefore by \cite{Berman}, we see that $(X,D+\epsilon \Delta)$ is K-semistable.  Then using Theorem \ref{perturb} again, we have $\delta(X,D)\geq 1$ which implies that $(X,D)$ is K-semistable.

For the K-polystable part, we will use the results from Section \ref{sec:goodmoduli}. Assume to the contrary that the log Fano pair $(X,D)$ is $G$-equivariantly K-polystable but not K-polystable. For simplicity, assume $G<
\Aut(X,D)$. By the above argument, we know that $(X,D)$ is K-semistable but not K-polystable. Hence by \cite{LWX18} there exists a special test configuration $(\cX,\cD;\cL)/\bA^1$ of $(X,D)$ such that the central fiber $(\cX_0,\cD_0)$ is K-polystable and not isomorphic to $(X,D)$. Let $\cM:=\cM_{n,I,V}^{\rm Kss, sn}$ be the corresponding K-moduli stack of log Fano pairs containing $x:=[(X,D)]$ and $x_0:=[(\cX_0,\cD_0)]$. Denote by $H:= \Aut(\cX_0,\cD_0)$. 
By Corollary \ref{cor:localquot}, there exists an \'etale morphism $f: (\cW,w_0)\to (\cM,x_0)$ where $\cW=[\spec(A)/H]$ such that $f$ induces an isomorphism of stabilizer groups at all points, and $f$ sends closed points to closed points. 
The special test configuration $(\cX,\cD;\cL)/\bA^1$ induces a morphism $\phi: \Theta\to \cM$ where $\Theta:=[\bA^1/\bG_m]$ such that $\phi(1)=x$ and $\phi(0)=x_0$. By Corollary \ref{cor:localquot} we know that $f$ is $\Theta$-surjective, hence there exists $w\in \cW$ and a morphism $\psi: \Theta\to \cW$ such that $\psi(1)=w$, $\psi(0)=w_0$, and $\phi=f\circ\psi$. 

Let $\tilde{w}_0$ be the $H$-invariant point in $\spec(A)$ as lifting of $w_0$. Then $\psi$ corresponds to a closed point $\tilde{w}\in \spec(A)$ as lifting of $w$, and a $1$-PS $\lambda: \bG_m\to H$, such that 
$\lim_{t\to 0}\lambda(t)\cdot \tilde{w} =\tilde{w}_0$.
Since $f$ preserves stabilizer groups, we know that the stabilizer group $H_{\tilde{w}}$ is isomorphic to $\Aut(X,D)$. Hence we have $G<H_{\tilde{w}}<H$. By \cite[Lemma 3.1]{New78}, there exists a $H$-equivariant closed embedding $(\spec(A),\tilde{w}_0)\hookrightarrow (V, 0)$ where $V$ is a finite dimensional rational representation of $H$. Then by a result of Luna \cite{luna1975adherences} (see also \cite[Section 6.11, Corollary 1]{VP89}), we know that $\tilde{w}_0\in \overline{N_H (G) \cdot \tilde{w}}$ where $N_H(G)$ is the normalizer subgroup. Since $G$ is finite, the centralizer subgroup $Z_H(G)$ is a finite index subgroup of $N_H(G)$ which implies $\tilde{w}_0\in \overline{Z_H (G) \cdot \tilde{w}}$. Moreover, $Z_H(G)$ is reductive since both $H$ and $G$ are reductive and the ground field is characteristic $0$. Hence there exists a $1$-PS $\lambda': \bG_m\to Z_H(G)$ such that 
$\lim_{t\to 0}\lambda'(t)\cdot \tilde{w} =\tilde{w}_0$. This induces a $G$-equivariant morphism $\psi': \Theta\to \cW$ where the $G$-action on $\Theta$ is trivial. Hence the morphism $\phi'=f\circ\psi': \Theta\to \cM$ provides a $G$-equivariant special test configuration $(\cX',\cD';\cL')$ of $(X,D)$ with central fiber isomorphic to $(\cX_0, \cD_0)$. Clearly $\Fut(\cX',\cD';\cL')=0$ and $(X,D)\not\cong(\cX_0,\cD_0)$, which  contradicts to the $G$-equivariant K-polystability of $(X,D)$. The proof is finished.
\end{proof}

\begin{remark}
It is a key step in the proof to produce a K\"ahler-Einstein metric on $(Y,B+\epsilon \Delta')$ using the result from \cite{li2019uniform}, the proof of which relies on various analytic tools. A purely algebraic proof seems not obvious to us at this point.
\end{remark}


\begin{proof}[Proof of Theorem \ref{thm:finitecover}]
(1) The ``only if'' direction is proven by \cite[Corollary 1.7]{Fuj19}. For the ``if'' direction, assume $(Y,B)$ is K-semistable. Using the same perturbation approach and pulling-back  K\"ahler-Einstein metrics in the proof of Theorem \ref{eq}, we have that $(X,D)$ is K-semistable. This proves part (1).

(2) Assume that $\pi:(X,D)\to (Y,B)$ is Galois with group $G<\Aut(X,D)$. We first treat the ``only if'' direction. Suppose $(X,D)$ is K-polystable. Then by part (1) we know that $(Y,B)$ is K-semistable. Let $(\cY,\cB)$ be a special test configuration of $(Y,B)$ such that $\Fut(\cY,\cB)=0$. Clearly it suffices to show that $(\cY_0,\cB_0)\cong (Y,B)$. By Proposition \ref{prop:specialquot},  a finite base change $(\cY^{(d)},\cB^{(d)})$ of $(\cY,\cB)$ is the $G$-quotient of a $G$-equivariant special test configuration $(\cX,\cD)$ of $(X,D)$. Since $\Fut(\cX,\cD)=d\cdot \Fut(\cY,\cB)=0$ and $(X,D)$ is K-polystable, we know that $(\cX,\cD)$ is a product test configuration. Let $\lambda:\bG_m\to \Aut(X,D)$ be the 1-PS inducing $(\cX,\cD)$. Then $(\cX,\cD)$ being $G$-equivariant implies that 
$\lim_{t\to 0}\lambda(t)\cdot g\cdot \lambda(t)^{-1}$ exists for any $g\in G$. Since $\Aut(X,D)$ is reductive by \cite{ABHLX19}, by \cite{Ric88} we know that representations of $G$ in $\Aut(X,D)$ has closed conjugacy classes. In particular, the group $G':=\{\lim_{t\to 0}\lambda(t)\cdot g\cdot \lambda(t)^{-1}\mid g\in G\}$ is conjugate to $G$ in $\Aut(X,D)$. As a result, $(\cY_0,\cB_0)\cong (X,D)/G'\cong (X,D)/G\cong (Y,B)$. 

For the ``if'' direction, suppose $(Y,B)$ is K-polystable. Then by part (1) we know that $(X,D)$ is K-semistable. Let $(\cX,\cD)$ be a special test configuration of $(X,D)$ such that $\Fut(\cX,\cD)=0$. According to Proposition \ref{prop:specialquot}, after taking $G$-quotient we obtain a special test configuration $(\cY,\cB)$ of $(Y,B)$ with $\Fut(\cY,\cB)=0$. Hence $(\cY,\cB)$ is a product test configuration induced by a $1$-PS $\lambda: \bG_m\to \Aut(Y,B)$. Since $\pi$ is finite, there exists $d\in \bZ_{>0}$ such that $\lambda^d$ lifts to a $1$-PS $\lambda': \bG_m\to \Aut(X,D)$. By Proposition \ref{prop:specialquot}, we know that $(\cX^{(d)},\cD^{(d)})$ is isomorphic to the product test configuration induced by $\lambda'$ since they have isomorphic quotients. Thus $(\cX,\cD)$ is also a product test configuration. 

(3)
Let $\delta:=\delta(X,D)$ and $\delta':=\delta(Y,B)$. By the computation in \cite[Proof of Corollary 1.7]{Fuj19}, we know that $\delta'\geq \delta$. Now assume $(Y,B)$ is K-unstable, then $\delta'<1$. For any $\epsilon>0$, we pick $\Delta'_\epsilon\in |-K_Y-B|_\Q$ such that $(Y,B+(1-\delta'+\epsilon)\Delta'_\epsilon)$ is K-semistable. Then by part (1) of Theorem \ref{thm:finitecover}, we see that $(X,D+(1-\delta'+\epsilon)\Delta_\epsilon)$ is also K-semistable, where $\Delta_\epsilon=\pi^\ast \Delta'_\epsilon$. Therefore Theorem \ref{perturb} implies that $\delta\geq \delta'-\epsilon$ for any $\epsilon$  and hence $\delta = \delta'$.
\end{proof}

\begin{proof}[Proof of Corollary \ref{cor:tian}]
Recall from \cite{Al} that when $G$ is a finite group we have
\[
\alpha_G(X,D)=\inf\{\lct(X,D;\Delta)\mid \Delta\in |-K_X-D|_{\bQ}\textrm{ is $G$-invariant}\}.
\]
Let $(Y,B)$ be the quotient of $(X,D)$ under the $G$-action. Denote by $\sigma:X\to Y$ the quotient map. Then there is a one-to-one correspondence between $G$-invariant $\bQ$-divisors $\Delta\in |-K_X-D|_{\bQ}$ and $\bQ$-divisors $\Delta'\in |-K_Y-B|_{\bQ}$ via $\Delta=\sigma^*\Delta'$. By \cite[Proposition 5.20]{KM98} we know that $\lct(X,D;\Delta)=\lct(Y,B;\Delta')$. Hence $\alpha_G(X,D)=\alpha(Y,B)$. Thus $\alpha_G(X,D)\geq \frac{n}{n+1}$ (resp. $>\frac{n}{n+1}$) implies $\alpha(Y,B)\geq \frac{n}{n+1}$ (resp. $>\frac{n}{n+1}$), which by \cite{Al, FO16} implies that $(Y,B)$ is K-semistable (resp. uniformly K-stable hence admits weak conical K\"ahler-Einstein metrics by \cite{li2019uniform}). 
Thus the proof is finished by Theorem \ref{thm:finitecover}.
\end{proof}

\section{Applications}\label{sec:appl}

In this section we present some applications of our main theorems.

\begin{thm}\label{thm:cycliccover}
Let $B$ be a hypersurface in $\bP^n$ of degree $2\leq d\leq n$. Let $e\geq 2$ be a positive integer that divides $d$. Let $X$ be the degree $e$ cyclic cover of $\bP^n$ branched along $B$. Then $X$ is K-semistable (resp. K-polystable) if $B$ is K-semistable (resp. K-polystable).
\end{thm}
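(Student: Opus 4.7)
First I would write the cyclic cover explicitly: since $e\mid d$, write $d=em$ and realize $X=\{y^e=f\}\subset\bP(1^{n+1},m)$, where $B=\{f=0\}$; then the projection $\pi\colon X\to\bP^n$ is a Galois finite cover with Galois group $G\cong\bZ/e$ acting by $y\mapsto\zeta_e y$, and the cyclic cover ramification formula gives $K_X=\pi^{\ast}\bigl(K_{\bP^n}+(1-\tfrac1e)B\bigr)$. Setting $c:=1-\tfrac1e$, the pair $(\bP^n,cB)$ is log Fano: ampleness of $-(K_{\bP^n}+cB)=(n+1-(e-1)m)H$ follows from $(e-1)m=d-m\le n-1<n+1$, and klt-ness follows from inversion of adjunction, since $B$ is klt as an abstract variety (being K-semistable) so $(\bP^n,B)$ is plt along $B$, whence $(\bP^n,cB)$ is klt for any $c<1$.

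Next I apply Theorem \ref{thm:finitecover}(1) and (2) to the Galois crepant finite cover $\pi\colon X\to(\bP^n,cB)$: $X$ is K-semistable (resp.\ K-polystable) if and only if $(\bP^n,cB)$ is K-semistable (resp.\ K-polystable). In this way, the theorem reduces to the following key implication, which is the main obstacle: K-semistability (resp.\ K-polystability) of the Fano hypersurface $B$ implies K-semistability (resp.\ K-polystability) of the log Fano pair $(\bP^n,cB)$.

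For the reduced implication, my plan is to use the valuative criterion (Theorem \ref{val} applied with trivial group): show $A_{(\bP^n,cB)}(F)\ge S_{(\bP^n,cB)}(F)$ for every prime divisor $F$ over $\bP^n$, splitting cases according to whether the center $c_{\bP^n}(F)$ lies in $B$. When $c_{\bP^n}(F)\not\subset B$, control $\beta_{(\bP^n,cB)}(F)$ using K-stability of $\bP^n$ itself together with an explicit computation absorbing the $cB$ correction. When $c_{\bP^n}(F)\subset B$, use an adjunction-type argument to bound $\beta_{(\bP^n,cB)}(F)$ from below in terms of a corresponding $\beta_B(F|_B)$, which is non-negative by the assumption $\delta(B)\ge 1$. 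For K-polystability, additionally invoke Proposition \ref{prop:specialquot}: any $G$-equivariant special test configuration of $X$ descends to one of $(\bP^n,cB)$, and adjunction to $B$ combined with K-polystability of $B$ forces the descended configuration to be a product, which after lifting yields that the original one is also a product.
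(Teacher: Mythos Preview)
Your reduction step is correct and matches the paper exactly: realize $\pi:X\to(\bP^n,cB)$ with $c=1-\tfrac1e$ as a Galois crepant cover and invoke Theorem~\ref{thm:finitecover}, so that everything reduces to proving that K-semistability (resp.\ K-polystability) of $B$ implies the same for $(\bP^n,cB)$. Your klt/ampleness checks are also fine, and the case $c_{\bP^n}(F)\not\subset B$ really is easy: since on $\bP^n$ all polarizations are proportional, $S_{(\bP^n,cB)}(F)=\tfrac{n+1-cd}{\,n+1\,}\,S_{\bP^n}(F)$ while $A_{(\bP^n,cB)}(F)=A_{\bP^n}(F)$, so $\delta$ only improves.

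The genuine gap is the case $c_{\bP^n}(F)\subset B$. The proposed ``adjunction-type'' bound $\beta_{(\bP^n,cB)}(F)\ge(\text{const})\cdot\beta_B(F|_B)$ is not an available lemma: $F|_B$ need not make sense as a prime divisor over $B$ (the center of $F$ on $B$ can have arbitrary codimension, and even when $F$ meets a model of $B$ properly the intersection need not be irreducible), and there is no general inequality of the shape you describe. Techniques that do control $\delta_{(\bP^n,cB)}$ along $B$ via invariants of $B$ (Abban--Zhuang style refinements) are substantially more intricate than what you have written and in any case are not developed in this paper. Likewise, your K-polystability plan does not work as stated: Proposition~\ref{prop:specialquot} passes between $G$-equivariant special test configurations of $X$ and special test configurations of $(\bP^n,cB)$, but it gives no mechanism to relate special test configurations of $(\bP^n,cB)$ to test configurations of $B$; ``adjunction to $B$'' is not a defined operation here.

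For comparison, the paper handles the key implication by an entirely different, more geometric route that never splits on the center of $F$. Set $r=\tfrac{n+1}{d}-1$. Using the cone construction, $(\bP^n,(1-\tfrac{r}{n})B)$ specially degenerates to the projective cone $(C_p(B,\cO_B(d)),(1-\tfrac{r}{n})B_\infty)$; the latter is K-semistable precisely because $B$ is (\cite[Proposition 5.3]{LX16}, \cite[Lemma 2.12]{LZ19}), and then openness under specialization \cite[Corollary 4]{BL18a} gives K-semistability of $(\bP^n,(1-\tfrac{r}{n})B)$. Since $1-\tfrac1e<1-\tfrac{r}{n}$, interpolation between this and the K-polystable pair $(\bP^n,0)$ yields K-semistability of $(\bP^n,cB)$. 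For K-polystability, the paper passes through GIT: $B$ K-polystable $\Rightarrow$ $B$ GIT polystable \cite{PT06} $\Rightarrow$ $(\bP^n,\epsilon B)$ K-polystable \cite[Theorem 1.4]{ADL19}, and then interpolation with the K-semistable endpoint $(\bP^n,(1-\tfrac{r}{n})B)$ gives K-polystability at $c$. These are the specific inputs you are missing; your plan, as written, does not supply a substitute for them.
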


\begin{proof}
By Theorem \ref{thm:finitecover}, it suffices to show that $(\bP^n, (1-\frac{1}{e})B)$ is K-semistable (resp. K-polystable) if $B$ is K-semistable (resp. K-polystable). We first assume that $B$ is K-semistable. Denote by $r:=\frac{n+1}{d}-1$ so that $-K_{\bP^n}\sim_{\bQ}(1+r) B$. Then by \cite[Proposition 5.3]{LX16} and \cite[Lemma 2.12]{LZ19}, we know that $(\bP^n, (1-\frac{r}{n})B)$ admits a special degeneration to a K-semistable log Fano pair $(C_p(B, \cO_B(d)), (1-\frac{r}{n})B_\infty)$ where $C_p(B, \cO_B(d))$ is the projective cone over $B$ with polarization $\cO_B(d)$, and $B_\infty\subset C_p(B, \cO_B(d))$ is the section at infinity. Thus by \cite[Corollary 4]{BL18a} we know that $(\bP^n, (1-\frac{r}{n})B)$ is K-semistable. Since
\[
0<1-\frac{1}{e}\leq 1-\frac{1}{d}<\frac{(d-1)(n+1)}{dn}=1-\frac{r}{n},
\]
by interpolation results of K-stability (see e.g. \cite[Lemma 2.6]{Der16} or \cite[Proposition 2.13]{ADL19}), we know that $(\bP^n,(1-\frac{1}{e})B)$ is K-semistable. If moreover that $B$ is K-polystable, then by \cite{PT06} we know that $B$ is GIT polystable. Then \cite[Theorem 1.4]{ADL19} implies that $(\bP^n, \epsilon B)$ is K-polystable. Then the same interpolation results imply that $(\bP^n, (1-\frac{1}{e})B)$ is K-polystable. The proof is finished.
\end{proof}

\begin{remark}
Note that similar results to Theorem \ref{thm:cycliccover} are obtained in \cite{AGP06} and \cite{Der16} where they considered cyclic covers of Fano varieties with branch loci being Calabi-Yau or of general type, while our branch loci are Fano.
\end{remark}

The following result characterizes the K-moduli space of cubic fourfolds of cyclic cover type. It also implies Theorem \ref{thm:smcubic}.

\begin{thm}
Let $X\subset \bP^5$ be the cubic fourfold with equation $x_5^3=f(x_0,\cdots,x_4)$. Denote by $B=(f=0)\subset\bP^4$ the ramification locus of the cyclic covering map $X\to\bP^4$ as a cubic threefold. Then 
$X$ is K-semistable (resp. K-polystable) if and only if $B$ is GIT semistable (resp. GIT polystable). In particular, any smooth $X$ is K-stable hence admits K\"ahler-Einstein metrics.
\end{thm}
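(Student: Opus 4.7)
My strategy is to reduce K-stability of $X$ first to K-stability of the log Fano pair $(\bP^4,\tfrac{2}{3}B)$ via a Galois cover, and then to GIT stability of the branch cubic $B$. Projecting away from $[0{:}\cdots{:}0{:}1]$ realizes $\pi\colon X\to\bP^4$ as a cyclic cover of degree $3$ with Galois group $\mu_3$, branched along $B$ with ramification index $e=3$. Hurwitz gives $K_X=\pi^\ast(K_{\bP^4}+\tfrac{2}{3}B)$, so $\pi\colon(X,0)\to(\bP^4,\tfrac{2}{3}B)$ is a Galois morphism of log Fano pairs in the sense of Definition \ref{defn:galois}. Theorem \ref{thm:finitecover}(1) and (2) then reduce the desired equivalence to showing that $(\bP^4,\tfrac{2}{3}B)$ is K-(semi/poly)stable if and only if $B$ is GIT (semi/poly)stable.

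For the implication ``$B$ GIT (semi/poly)stable $\Rightarrow$ pair K-(semi/poly)stable'', I would appeal to a theorem of Liu and Xu identifying K-(semi/poly)stability of cubic threefolds with their GIT (semi/poly)stability, converting the hypothesis to K-(semi/poly)stability of $B$, and then apply Theorem \ref{thm:cycliccover}---whose hypotheses $2\le d\le n$ and $e\mid d$ hold in our case $(n,d,e)=(4,3,3)$---to deduce K-(semi/poly)stability of $X$, which transfers to the pair via the reduction. For the K-semistable converse, I would use interpolation (\cite[Proposition 2.13]{ADL19}): K-semistability of $(\bP^4,\tfrac{2}{3}B)$ together with K-semistability of $\bP^4$ implies K-semistability of $(\bP^4,\epsilon B)$ for every $\epsilon\in(0,\tfrac{2}{3}]$, and then \cite[Theorem 1.4]{ADL19} at small $\epsilon$ converts this to GIT semistability of $B$.

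The K-polystable converse ``pair K-polystable $\Rightarrow$ $B$ GIT polystable'' is the main obstacle, since interpolation does not preserve polystability. I would argue by contradiction: assuming the pair is K-polystable while $B$ is GIT semistable but not polystable, there exists a one-parameter subgroup $\lambda\colon\bG_m\to SL(5)$ with vanishing GIT weight $\mu(B,\lambda)=0$ whose limit $B_0:=\lim_{t\to 0}\lambda(t)\cdot B$ lies outside the $SL(5)$-orbit of $B$. The action of $\lambda$ induces a test configuration of $(\bP^4,\tfrac{2}{3}B)$ whose central fiber $(\bP^4,\tfrac{2}{3}B_0)\not\cong(\bP^4,\tfrac{2}{3}B)$ is non-product, while its generalized Futaki invariant is a positive multiple of $\mu(B,\lambda)=0$ by the Paul--Tian computation \cite{PT06}. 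This contradicts the K-polystability of the pair.

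Finally, for the smoothness assertion: a direct local computation shows $X$ is smooth iff $B$ is smooth, and smooth cubic threefolds are classically GIT stable. Hence $X$ is K-polystable; since $\Aut(X)$ is finite (automorphisms of a smooth cubic fourfold are linear, and linear automorphisms of $\bP^5$ preserving $X$ form a finite group), K-polystability upgrades to K-stability, and a K\"ahler--Einstein metric exists by the Yau--Tian--Donaldson correspondence \cite{li2019uniform}.
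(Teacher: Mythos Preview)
Your proof is correct and follows the same overall architecture as the paper: reduce via the Galois cyclic cover to the pair $(\bP^4,\tfrac{2}{3}B)$ using Theorem~\ref{thm:finitecover}, handle the ``if'' direction with Liu--Xu and Theorem~\ref{thm:cycliccover}, and deduce the final smoothness assertion from finiteness of $\Aut(X)$.

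The difference lies in the ``only if'' direction. The paper invokes Paul--Tian's criterion in its pair form (\cite[Theorem~1]{PT06}, see also \cite[Theorem~2.22]{ADL19}) as a single black box: K-(semi/poly)stability of $(\bP^4,\tfrac{2}{3}B)$ directly yields GIT (semi/poly)stability of $B$, uniformly for both notions. You instead give two separate arguments. For semistability you interpolate down to small coefficient and then appeal to \cite[Theorem~1.4]{ADL19}; this is a legitimate alternative, though note that \cite[Theorem~1.4]{ADL19} itself rests on the Paul--Tian CM-weight comparison, so the dependence is not truly avoided. For polystability your contradiction argument---produce a $1$-PS with $\mu(B,\lambda)=0$ and $B_0\notin SL(5)\cdot B$, then observe that the induced test configuration of the pair has vanishing Futaki invariant but is non-product---is precisely the mechanism behind Paul--Tian's criterion unpacked by hand. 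The key equality ``$\Fut$ equals a positive multiple of $\mu(B,\lambda)$'' \emph{is} that criterion in its log-pair form, so your citation of \cite{PT06} alone is slightly imprecise; the pair version (as in \cite[Theorem~2.22]{ADL19}) is what is actually needed. In short, the paper's route is more economical and uniform; yours is more explicit but ultimately invokes the same ingredient. One minor point: for the K\"ahler--Einstein metric on smooth $X$ the paper cites \cite{CDS15, Tia15} rather than \cite{li2019uniform}, but either reference suffices here.
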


\begin{proof}
The ``if'' direction follows from Theorem \ref{thm:cycliccover} and \cite{LX17b} where it is shown that $B$ is GIT semistable (resp. GIT polystable) if and only if it is K-semistable (resp. K-polystable). For the ``only if'' direction, assume that $X$ is K-semistable (resp. K-polystable). Then Theorem \ref{thm:finitecover} implies that $(\bP^4, \frac{2}{3}B)$ is K-semistable (resp. K-polystable). By Paul-Tian's criterion \cite[Theorem 1]{PT06} (see also \cite[Theorem 2.22]{ADL19}), we know that  $B$ is GIT semistable (resp. GIT polystable).

When $X$ is smooth, we know that $B$ is smooth hence GIT stable. Thus $X$ is K-polystable by the argument above which implies that $X$ is K-stable since it has finite automorphism group. Then the existence of K\"ahler-Einstein metrics follows from \cite{CDS15, Tia15}.
The proof is finished. 
\end{proof}

\bibliographystyle{alpha}
\bibliography{ref}
\end{document}